\newtheorem{remark}{Remark}
\newtheorem{lem}{Lemma}
\newtheorem{prop}{Proposition}
\newtheorem{theo}{Theorem}
\newtheorem{proof}{Proof}
\newtheorem{Pb}{Problem}
\newtheorem{Ass}{Assumption} 
\journal{Computers \& Mathematics with Applications}
\begin{document}

\begin{frontmatter}

\title{Convergence of the Two Point Flux Approximation and a novel  fitted Two-Point Flux Approximation method for pricing options}


\author[aims,uct]{Rock  S. Koffi}
 \ead{rock@aims.ac.za}
 \address[aims]{The African Institute for Mathematical Sciences(AIMS), 6-8 Melrose Road, Muizenberg 7945, South Africa}
 \address[uct]{Department of Mathematics and Applied Mathematics, University of Cape Town, 7701 Rondebosch, South Africa}

\author[at,atb,atc]{Antoine Tambue}
\cortext[cor1]{Corresponding author}
\ead{antonio@aims.ac.za}
\address[at]{Department of Computing Mathematics and Physics,  Western Norway University of Applied Sciences, Inndalsveien 28, 5063 Bergen.}
\address[atc]{The African Institute for Mathematical Sciences(AIMS),
6-8 Melrose Road, Muizenberg 7945, South Africa}
\address[atb]{Center for Research in Computational and Applied Mechanics (CERECAM), and Department of Mathematics and Applied Mathematics, University of Cape Town, 7701 Rondebosch, South Africa.}


%
%


\begin{abstract}
In this paper, we  deal with numerical approximations for solving the Black-Scholes Partial Differential Equation (PDE). This PDE is well known to be degenerated.
The space discretization is performed using the classical finite volume method with Two-Point Flux Approximation (TPFA) and a novel scheme called fitted Two-Point Flux Approximation (TPFA).
 The fitted Two-Point Flux Approximation (TPFA)  combines the fitted finite volume method and the  standard TPFA method. 
More precisely the fitted finite volume method is used when the stock price approaches zero in order to handle the degeneracy of the PDE and the TPFA method 
is used on the rest of space domain.  This combination yields our  novel fitted  TPFA scheme.
 The Euler  method is used for the time discretization. We provide the rigorous convergence proofs of 
 the two  fully discretized schemes.  Numerical experiments to support  our theoretical results are provided.
\end{abstract}

\begin{keyword}
\text{Black Scholes PDE}\sep Degenerated PDE\sep fitted finite volume method  \sep Two-point Flux Approximation methods
\end{keyword}

\end{frontmatter}

\section{Introduction}

A financial market is a market whereby investors, companies and governments meet to trade financial securities such as  bonds, stocks,
 precious metal.  A good number of transactions on financial markets are about buying and selling options. Indeed, an option is a contract
  which gives the right to buy (call) or to sell (put) an underlying asset at an agreed price (strike) on (European options) or before (American options) a specified date (maturity). 
In their seminal paper \cite{black1973pricing}, Fisher Black and Myron Scholes stated the famous  Black Scholes model. Under some assumptions, 
the derivation of the Black Scholes model leads to a second order parabolic Partial Differential Equation (PDE) with respect to time and the underlying stock price.
 An analytical solution has been found for the Black Scholes PDE only for pricing European options with constant coefficients.
 Therefore, numerically algorithms are usually used to solve the general Black Scholes PDE. 
 The first numerical method used to solve the Black Scholes PDE  was the lattice method in \cite{cox1979option}. 
 Afterwards, several numerical methods such as the finite difference method \cite{duffy2013finite}, the finite volume method and the finite elements 
 methods \cite{topper2005financial} have been used to solve the Black Scholes PDE. 
 However, the Black Scholes PDE is degenerated when  the stock price approaches zero (see \cite{duffy2013finite}). This degeneracy may affect the accuracy of the numerical method used if sophistical technique is not used. 
 Thereby in \cite{wang2004novel}, S. Wang proposed a fitted finite volume method with the corresponding convergence proof in space  to tackle  the degeneracy of the Black- Scholes PDE. 
 Moreover, under less restrictive and more realistic assumptions, a convergence proof in space of the fitted finite volume method for pricing American options is proposed in \cite{wang2006power}. 
 Furthermore, in \cite{angermann2007convergence}, a rigorous convergence proof of a fully discretized scheme using the fitted finite volume method and $\theta-$Euler method for pricing both American and European options is provided. 
 In \cite{koffi2019fitted}, a novel fitted Multi-Point Flux Approximation (MPFA) to pricing two dimensional options have been provided where the standard MPFA have been coupled with the fitted technique in \cite{angermann2007convergence, wang2006power}.
 It has  also been shown computationally in \cite{koffi2019fitted} that the fitted MPFA is more accurate than the fitted finite volume method \cite{angermann2007convergence, wang2006power} for pricing options.
  In one dimensional case, the MPFA method is simply reduced to the classical Two Point Flux Approximation (TPFA) method. 
  Note that  convergence proofs of the TPFA method  are provided in \cite{eymard2000finite,tambue2016exponential}  for non degenerated parabolic PDE.  Their proofs are based 
  on the fact that the diffusion coefficient can not reach zero, therefore  such proofs can not be extended to the degenerated Black  Scholes PDE where the diffusion coefficient is zero at $s=0$ (stock price is zero).
  To the best of our knowledge, the convergence of   classical TPFA method for degenerated  PDE  have been lacking  in the literature due to the complexity of that degeneracy.

In this work, we fill the gap by providing a rigorous convergence proof of a fully discretized scheme using  the classical TPFA method  for degenerated Black Scholes PDE in one dimension.
Furthermore,  we also  derive the fitted-TPFA method for the  degenerated Black Scholes PDE by combining  the classical TPFA method  and the fitted finite volume method\cite{angermann2007convergence, wang2006power}
 and provide rigorous convergence proof of a fully discretized scheme where the time discretization is performed using the classical Euler methods.
Note the fitted finite volume method \cite{angermann2007convergence, wang2006power}  in  this combination  has for goal to handle the degeneracy of the PDE when the stock price approaches zero. 

The outline of this work is the following. In section \ref{cont-prob}, notations and mathematical setting of the continuous  problem are provided.
 In section \ref{finfit}  the spatial discretization   using the standard finite volume method with Two Point Flux Approximation  are provided along with the corresponding novel fitted scheme.
  The coercivity proofs of the corresponding discrete bilinear  forms are also provided to ensure the existence and uniqueness of the discrete solution after TPFA  spatial discretization and fitted TPFA spatial discretization.
  The full discretization of the Black Sholes degenerated PDE and the convergence results are performed in section \ref{ful-dis}.  Note that the temporal discretization is performed using the standard $\theta-$ Euler method.
 Finally, numerical experiments are given in section \ref{num-exp} to support theoretical results.
%

\section{Notations and mathematical setting}
\label{cont-prob}
Let us first introduce some functional spaces and their associated norm that we will  use in this work. For an open set 
$\Omega \subset \mathbb{R}$, the space of square integrable functions is denoted  $L^2(\Omega)$. We denote also by $C(\Omega)$ (respectively $C(\overline{\Omega})$) the set of continuous functions over $\Omega$ (respectively on $\overline{\Omega}$).  For any Hilbert space $G(\Omega)$ of classes of functions defined on $\Omega$, we let $L^2\big(0,T;G(\Omega)\big)$ denote the space defined by
\begin{equation}
L^2\Big((0,T);G(\Omega)\Big)=\Bigg\{  v /  v(\cdot,t)\in G(\Omega)~a.e~in~(0,T); \vert\vert v(\cdot,t)\vert\vert_{G}\in L^2\big((0,T)\big)\Bigg\},
\end{equation}
where $\vert\vert \cdot \vert\vert_{G}$ denotes the natural norm on $G(\Omega)$.
 The norm on this  space is denoted by $\vert\vert \cdot \vert\vert_{L^2\big(0,T;G(\Omega)\big)}$  and is  defined  by
\begin{equation}
\vert\vert v \vert\vert_{L^2\big(0,T;G(\Omega)\big)}=\Bigg( \int_0^T \vert\vert v(\cdot,t)\vert\vert_G^2 dt\Bigg)^{1/2}.
\end{equation}
The Black-Scholes operator being degenerated, we introduce a weighted $L^2$-norm $\vert\vert \cdot  \vert\vert_{\omega}$  defines by
\begin{equation}
\vert\vert v \vert\vert_{\omega}=\Bigg(\int_{\Omega} x^2v^2 dx\Bigg)^{1/2}.
\end{equation}
The space of all weighted square integrable functions is defined as
\begin{equation}
L^2_{\omega}(\Omega)=\Bigg\{ v: \vert\vert v \vert\vert_{\omega} < \infty \Bigg\},
\end{equation}
and the corresponding weighted inner product on $L^2_{\omega}(\Omega)$ by
\begin{equation}
 (u,v)_{\omega}= \int_{\Omega} x^2 uv dx.
\end{equation}
Thereby, we define the weighted Sobolev spaces as follows:
\begin{eqnarray}
\label{h1}
H^1_{\omega}(\Omega)=\Bigg\{v\in\mathbf{L}_2(\Omega): \exists g \in L^2_{\omega}(\Omega)~~~such~~that ~~\int_{\Omega} v \varphi' = -\int_{\Omega} g \varphi~~~~~~~\forall \varphi\in C_c(\Omega)\Bigg\}.
\end{eqnarray}
Note that in \eqref{h1}, $g=v'$ is the weak derivative. We also denote by
\begin{equation}
H^1_{0,\omega}(\Omega)=\Bigg\{v:v\in\mathbf H^1_{\omega}(\Omega)~~and~~v\big\vert_{\partial \Omega}=0\Bigg\}.
\end{equation}
Using the inner products on $L^2(\Omega)$ and $L^2_{\omega}(\Omega)$, we define   the norm $\vert\vert \cdot \vert\vert_{1,\omega}$  on $H^1_{\omega}(\Omega)$ by
\begin{equation}
\big\vert\big\vert v \big\vert\big\vert_{1,\omega} = \Bigg[\big\vert\big\vert v \big\vert\big\vert^2_{L^2(\Omega)}+\big\vert\big\vert v' \big\vert\big\vert^2_{\omega}\Bigg]^{1/2}=\Bigg[(v,v)+(x^2v',v')\Bigg]^{1/2}.
\end{equation} 
Without loss the generality \footnote{The American  options are solved by just adding a nonlinear term  called Penalty}, let us consider the Black-Scholes equation PDE
\begin{equation}
\label{BS-PDE}
LV:=\frac{\partial V}{\partial t}-\frac{1}{2}\sigma^2(t)x^2\frac{\partial^2 V}{\partial x^2}-r(t)x\frac{\partial V}{\partial x}+r(t)V=0,
\end{equation}
in $(x,t)\in \Omega=(0,x_{\max}) \times (0,T]$, where $V$ is the option value, $x$ the stock price, $\sigma$ the volatility, $r$ the risk free interest ,$t$ is the time  and  $T$  is the maturity time.
The corresponding initial and boundary conditions are:
\begin{equation}
\label{bound-cond}
\left\lbrace \begin{array}{lcr}
V(x,0) & =  & g_1(x) \\
V(0,t) & =  & g_2(t)   \\
V(x_{\max},t) & = & g_3(x),
\end{array} 
\right.	
\end{equation}
where $g_1, g_2$ and $g_3$ are functions depending on type of options we are pricing.
Our study is conducted under the following assumption.
\begin{Ass}
\label{assum1}
We assume that the coefficient $r$ and $\sigma$ are sufficiently smooth and satisfy 
\begin{eqnarray}
0\leq r(t) \leq \bar{r} ~~~~~~\underline{\sigma} \leq \sigma(t) \leq \bar{\sigma}
\end{eqnarray}
and we denote by
	\begin{equation}
	\label{ass-sig}
	\beta:= \sup_{t\in[0,T]}\sigma^2(t).
	\end{equation}
\end{Ass}
%
%
Multiplying by $e^{\beta t}$ and adding $f(x,t)=-e^{\beta t}LV_0(x)$ to both sides of (\ref{BS-PDE}), we can therefore transform the  boundary conditions in (\ref{bound-cond}) to   homogeneous Dirichlet boundary conditions 
by using the following linear transformation
\begin{equation}
V_0(x,t)=g_2(t)+\dfrac{x}{x_{\max}}\Big(g_3(t)-g_2(t)\Big).
\end{equation}
Furthermore, we introduce a new variable $u=e^{\beta t}\big(V-V_0\big)$ and we get  this new PDE  in its following divergence form:
\begin{equation}
\label{self-adj}
\frac{\partial u}{\partial t}-\frac{\partial}{\partial x}\Bigg[a(t)x^2\frac{\partial u}{\partial x}+b(t)xu\Bigg]+c(t)u=f(x,t),
\end{equation}
where
\begin{equation}
a(t)=\frac{1}{2}\sigma^2(t),~~~~b(t)=r(t)-\sigma^2(t),~~~~c(t)=2r(t)+\beta-\sigma^2(t),
\end{equation}
with the following initial and  homogeneous boundary conditions
\begin{equation}
\label{bound-cond-homo}
\left\lbrace \begin{array}{lcr}
u(0,t)=0=u(x_{\max},t) ~~~~~~~t\in [0,T)\\
u(x,0)  =  g_1(x)-V_0(x)~~~~~~~x\in \Omega.
\end{array} 
\right.	
\end{equation}
It can be proved the solving \eqref{self-adj}-\eqref{bound-cond-homo} is equivalent to the following problem:
\begin{Pb}
	Find  the function $u\in L^2(0, T,H^1_{0,\omega}(\Omega))$ such that
	\begin{equation}
	\label{weak-form}
		(u'(t),v)+A(u,v;t)=\Big(f,v\Big)~~~~\forall~v\in H^1_{0,\omega}(\Omega)
	\end{equation}	
	with
	\begin{equation}
	A(u,v;t):= \Big(ax^2u'+bxu,v'\Big)+\Big(cu,v\Big).
	\end{equation}
\end{Pb}
The follolwing lemma is key to prove the existence and uniqueness of the solution of \textbf{Problem 1}.
\begin{lem}
\label{lem1}
	There exist positive constants $C$ and $M$ such that for any $v,z\in H_{0,\omega}^1(\Omega)$,
	\begin{eqnarray}
	A(v,v;t)  & \geq  &  C \vert\vert v \vert\vert_{1,\omega}^2, \\
	A(z,v;t) &\leq &   M \cdot \vert\vert z \vert\vert_{1,\omega} \cdot  \vert\vert v \vert\vert_{1,\omega}.
	\end{eqnarray}
\end{lem}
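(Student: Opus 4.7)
The plan is to unfold the definition of $A$ and treat the two estimates separately, handling coercivity via an integration by parts and boundedness via Cauchy--Schwarz in the appropriate weighted spaces.

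For the coercivity estimate, I would expand
\[
A(v,v;t) = a(t)\,(x^2 v',v') + b(t)\,(xv,v') + c(t)\,(v,v)
         = a(t)\,\|v'\|_\omega^2 + b(t)\int_\Omega xv\,v'\,dx + c(t)\,\|v\|_{L^2}^2,
\]
and then integrate by parts in the middle term. Writing $xvv' = \tfrac{1}{2}x(v^2)'$ and using that $v\in H^1_{0,\omega}(\Omega)$ kills the boundary trace at $x=x_{\max}$ while the factor $x$ kills it at $x=0$, I get $\int_\Omega xvv'\,dx = -\tfrac{1}{2}\|v\|_{L^2}^2$. Hence
\[
A(v,v;t) = a(t)\,\|v'\|_\omega^2 + \bigl(c(t)-\tfrac{1}{2}b(t)\bigr)\,\|v\|_{L^2}^2.
\]
Substituting the formulas for $a,b,c$ gives $c-\tfrac{1}{2}b = \tfrac{3}{2}r(t)+\beta-\tfrac{1}{2}\sigma^2(t)$. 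By Assumption~\ref{assum1}, $r(t)\geq 0$ and $\beta-\tfrac12\sigma^2(t)\geq \tfrac12\sigma^2(t)\geq \tfrac12\underline{\sigma}^2$, while $a(t)\geq \tfrac12\underline{\sigma}^2$. Taking $C := \tfrac12\underline{\sigma}^2$ yields the coercivity inequality.

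For the continuity estimate, I would bound each of the three pieces of $A(z,v;t)$ using Cauchy--Schwarz in the correct space: rewrite $(x^2 z',v') = (xz',xv')_{L^2}\leq \|z'\|_\omega\|v'\|_\omega$, rewrite $(xz,v') = (z,xv')_{L^2}\leq \|z\|_{L^2}\|v'\|_\omega$, and $(z,v)\leq \|z\|_{L^2}\|v\|_{L^2}$. Using $|a(t)|\leq \tfrac12\bar{\sigma}^2$, $|b(t)|\leq \bar{r}+\bar{\sigma}^2$, $|c(t)|\leq 2\bar{r}+\beta+\bar{\sigma}^2$, together with $\|z\|_{L^2}\leq \|z\|_{1,\omega}$ and $\|z'\|_\omega\leq \|z\|_{1,\omega}$ (and similarly for $v$), I can group the three contributions into a single constant $M$ times $\|z\|_{1,\omega}\|v\|_{1,\omega}$.

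The only non-routine step is justifying the integration by parts in the coercivity argument: strictly, one should note that $[xv^2]_0^{x_{\max}}$ vanishes because the trace at $x_{\max}$ is zero by $v\in H^1_{0,\omega}(\Omega)$ and the weight $x$ kills the behaviour at the degenerate endpoint, which can be made rigorous by a density argument with smooth compactly supported test functions. Beyond this, the rest is just Cauchy--Schwarz and bookkeeping of the time-dependent coefficients.
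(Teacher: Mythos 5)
Your proof is correct and, for the coercivity bound, follows essentially the same route as the paper: integrate the convection term by parts, use the vanishing boundary values to reduce $(bxv,v')$ to $-\tfrac12(bv,v)$, and then absorb the zeroth-order coefficient $c-\tfrac12 b=\tfrac32 r+\beta-\tfrac12\sigma^2$ using Assumption~\ref{assum1}. Two remarks are worth making. First, your lower bound is actually slightly more robust than the paper's: you keep the contribution $\beta-\tfrac12\sigma^2\ge\tfrac12\underline{\sigma}^2$ and take $C=\tfrac12\underline{\sigma}^2$, whereas the paper discards it and ends with $C=\tfrac12\min(\underline{\sigma}^2,3r)$, which degenerates to zero when $r=0$ --- a value permitted by Assumption~\ref{assum1} ($0\le r(t)\le\bar r$). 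Second, the paper proves only the coercivity inequality and omits the continuity estimate entirely; your Cauchy--Schwarz argument for $A(z,v;t)\le M\|z\|_{1,\omega}\|v\|_{1,\omega}$ (splitting $(x^2z',v')=(xz',xv')_{L^2}$ and $(xz,v')=(z,xv')_{L^2}$) is the standard and correct way to supply it, so your write-up is in fact more complete than the paper's. Your cautionary note about justifying the boundary term at the degenerate endpoint $x=0$ is well taken, although with the paper's definition of $H^1_{0,\omega}(\Omega)$, which imposes $v\vert_{\partial\Omega}=0$ directly, the boundary term vanishes at both endpoints without appealing to the weight.
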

\begin{proof}
	Integrating by parts, we have, for any $v\in H^1_{0,\omega}(\Omega)$,
	\begin{eqnarray}
		\int_{\Omega} b xvv'= \frac{1}{2}bx v^2\Big\vert_{\partial \Omega}-\frac{1}{2}\int_{\Omega}bv^2dx.
	\end{eqnarray}	
Since $v\in H^1_{0,\omega}(\Omega)$, then $v\big\vert_{\partial \Omega}=0$. Thereby, we have
\begin{eqnarray}
	\int_{\Omega} bxvv'= -\frac{1}{2}\int_{\omega} bv^2dx.
\end{eqnarray}
Indeed, we also have
\begin{eqnarray}
A(v,v;t)	 & = & \Big(ax^2v',v'\Big)+\Big(bxv,v'\Big)+\Big(cv,v\Big) \nonumber\\      
	&   &  \nonumber\\
	& = & \Big(ax^2v',v'\Big) - \frac{1}{2}\Big(bv,v\Big)+\Big(c v,v\Big) \nonumber\\
	&   &  \nonumber\\
	& = & \Big(ax^2v',v'\Big)+\frac{1}{2}\Big(\big(3r+2\beta-\sigma^2\big)v,v\Big). \nonumber
	\end{eqnarray}
By using (\ref{ass-sig}), we have  $2\beta-\sigma^2>0$ which leads to 
	\begin{eqnarray}
A(v,v;t) & \geq & \frac{1}{2} \times \sigma^2\Big(x^2v',v'\Big)+\frac{1}{2}\times 3r\Big(v,v\Big) \label{asp} \\
	&   &  \nonumber\\
	& \geq & \frac{1}{2}\min\big(\underline{\sigma}^2,3r\big)\Bigg[\Big(x^2v',v'\Big)+\Big(v,v\Big)\Bigg]  \nonumber \\
	&    &  \nonumber\\
A(v,v;t) &   \geq & C \big\vert\big\vert v \big\vert\big\vert_{1,\omega}^2. 
\end{eqnarray}
\end{proof}
\begin{theo}
 Under  Asumption \ref{assum1},\textbf{Problem 1} have a unique solution.
\end{theo}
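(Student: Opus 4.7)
The plan is to recognize Problem~1 as an abstract linear parabolic variational problem in a Gelfand triple and invoke the classical J.-L.~Lions existence/uniqueness theorem for such problems. I would take as test space $V = H^1_{0,\omega}(\Omega)$ (Hilbert under $\|\cdot\|_{1,\omega}$) and as pivot space $H = L^2(\Omega)$, giving $V \hookrightarrow H \hookrightarrow V^*$. The continuous inclusion $V \hookrightarrow H$ is immediate from the definition of $\|\cdot\|_{1,\omega}$, and density of $V$ in $H$ follows from $C_c^\infty(\Omega) \subset V$ being dense in $L^2(\Omega)$.

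With this setup, Lemma~\ref{lem1} already delivers precisely the two analytic ingredients required by Lions' theorem: $A(\cdot,\cdot;t)$ is bilinear, bounded, and coercive on $V$ with constants uniform in $t$, while the $t$-dependence through $a(t), b(t), c(t)$ is continuous by Assumption~\ref{assum1}, hence measurable. It remains only to verify the data have the correct regularity, namely that the transformed source $f(x,t) = -e^{\beta t} L V_0(x)$ lies in $L^2(0,T;V^*)$ (in fact in $L^2((0,T)\times\Omega)$) and that the initial value $u(\cdot,0) = g_1 - V_0(\cdot,0)$ lies in $H$. Both are immediate from the explicit formulae and the smoothness assumed on $g_1, g_2, g_3$ and on the coefficients.

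Concretely I would implement this by the Faedo-Galerkin method. Pick a Schauder basis $\{e_k\}_{k\geq 1}$ of $V$ that is orthonormal in $H$, define $V_n = \mathrm{span}\{e_1,\dots,e_n\}$, and solve the resulting linear ODE system for $u_n(t) = \sum_{k=1}^n \alpha_k^n(t) e_k$, which is uniquely solvable on $[0,T]$ because $A$ is bounded. Testing with $u_n$ itself, applying Lemma~\ref{lem1} and Gronwall's inequality yields the uniform bounds $\|u_n\|_{L^\infty(0,T;H)} + \|u_n\|_{L^2(0,T;V)} \leq C$. A standard weak-compactness argument extracts a subsequence $u_n \rightharpoonup u$ in $L^2(0,T;V)$ whose limit satisfies \eqref{weak-form}; the initial condition is recovered through the equivalence $u \in L^2(0,T;V)$, $u' \in L^2(0,T;V^*) \Rightarrow u \in C([0,T];H)$. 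Uniqueness is immediate: if $u_1$ and $u_2$ both solve Problem~1, then $w = u_1 - u_2$ satisfies $\tfrac{1}{2}\tfrac{d}{dt}\|w\|_{L^2}^2 + A(w,w;t) = 0$, and coercivity combined with $w(0)=0$ forces $w\equiv 0$.

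The main obstacle, more delicate than in a non-degenerate parabolic setting, is confirming that the weighted space $H^1_{0,\omega}(\Omega)$ behaves as expected: it must be complete, $C_c^\infty(\Omega)$ must be dense in it so that the boundary condition $v|_{\partial\Omega}=0$ is meaningful despite the weight $x^2$ vanishing at $x=0$, and the integration by parts tacitly used in the proof of Lemma~\ref{lem1} must be justified for all elements of $V$. These facts are by now standard in the Black-Scholes weighted framework developed in the cited works, so once they are taken for granted the theorem follows essentially as a corollary of Lemma~\ref{lem1}.
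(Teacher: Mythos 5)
Your proposal is correct and follows essentially the same route as the paper: the paper simply invokes Lemma~\ref{lem1} together with the standard abstract existence/uniqueness theorem for coercive parabolic variational problems (citing \cite[Theorem 1.33]{existence}), and your Gelfand-triple/Faedo--Galerkin sketch is precisely the standard proof of that cited result. Your closing remarks on the completeness of $H^1_{0,\omega}(\Omega)$ and the justification of the boundary terms are a useful flag of what the citation leaves implicit, but they do not change the argument.
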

\begin{proof}
The proof uses  lemma \ref{lem1} and can be found in \cite[Theorem 1.33]{existence}.
\end{proof}
\section{The finite volume formulation}
\label{finfit}
\subsection{Finite volume grid and discrete representation of the exact solution}
Let  $\Omega$ be subdivided into sub-intervals as follows:
\begin{equation}
	\Omega_i=[x_{i};x_{i+1}]~~~~i=0,..,N, 
\end{equation} 
with~~$0=x_0<x_1<x_2<\ldots<x_N<x_{N+1}=x_{max}~~~$ and~~~$h_i=x_{i+1}-x_i$. We also  defined the following mid-points of the intervals $\Omega_i$ by
\begin{eqnarray*}
x_{i-\frac{1}{2}}=\frac{x_{i-1}+x_i}{2}~~~\text{and}~~x_{i+\frac{1}{2}}=\frac{x_i+x_{i+1}}{2}~~~~~~~\text{for}~~i=0,1,2,\dots,N+1,
\end{eqnarray*}
 with $x_{-\frac{1}{2}}=x_0$ and $x_{N+\frac{3}{2}}=x_{\max}$. These mid-points help us to define  another partition  $K_i$  of $\Omega$,  called dual partition, defined by 
\begin{eqnarray*}
	K_i=[x_{i-\frac{1}{2}};x_{i+\frac{1}{2}}]~~~~~~~~~~l_i=x_{i+\frac{1}{2}}-x_{i-\frac{1}{2}}~~~~~~~~~~i=0,..,N.
\end{eqnarray*}
\begin{Ass}{[Local quasi-uniformity of the spatial mesh]}\\
\label{assum2}
There exists a constant $c>0$ such that
	\begin{eqnarray}
	\label{ass-sp}
	\frac{l_{i+1}}{c} \leq   l_i   \leq cl_{i+1}~~~~~i=0,1,\ldots,N.                   
	\end{eqnarray}
\end{Ass}
Since  the dual partition  $K_i$ is linked to the partition $\Omega_i$, Assumption \ref{assum2} implies that 
\begin{eqnarray}
\frac{h_{i+1}}{c} \leq    h_i   \leq ch_{i+1}\;\;\; i=0,1,\ldots,N.
\end{eqnarray}
We can now apply the finite volume method by integrating (\ref{self-adj}) over each interval $K_i$ for $i=1,\ldots,N$.
\begin{equation}
\label{eq-diff-int}
\int_{K_i}\frac{\partial u}{\partial \tau}dx-\int_{K_i}\frac{\partial}{\partial x}\Bigg[ax^2\frac{\partial u}{\partial x} +bxu\Bigg]dx+\int_{K_i}cudx=\int_{K_i} f(x,t) dx.
\end{equation}
Multiplying \ref{eq-diff-int} by an arbitrary real number $v_i$ for each $i=1,\ldots,N$ and summing them up, we get 
\begin{equation}
\label{eq-diff-int-1}
\sum_{i=1}^N\int_{K_i}\frac{\partial u}{\partial \tau}v_idx-\sum_{i=1}^N\int_{K_i}\frac{\partial}{\partial x}\Bigg[ax^2\frac{\partial u}{\partial x}+bxu\Bigg]v_idx+\sum_{i=1}^N\int_{K_i}cuv_idx=\sum_{i=1}^N\int_{K_i} f(x,t)v_i dx.
\end{equation}
Besides, for a function $v\in C(\overline{\Omega})$ we define the mass lumping operator $L_h$ define as follows:
\begin{eqnarray}
L_h:C(\overline{\Omega}) & \longrightarrow &  L^{\infty}(\Omega)   \nonumber \\
v      & \mapsto  &  L_hv\big\vert_{K_i}:=v(x_i),~~~~~~~~~i=1,\ldots,N. \nonumber
\end{eqnarray}
Moreover, if the function $v$ satisfies homogeneous Dirichlet boundary conditions, we have $L_hv\big\vert_{\partial \Omega}=0$. Then using the operator $L_h$, we can re-write (\ref{eq-diff-int-1}) as follows:
\begin{equation}
\label{dexact}
\big(-\dot{u}(t),L_hv\big) +\hat{a}_h\big(u(t),v;t\big)=\big(f(t),L_hv\big),
\end{equation} 
where
\begin{equation}
\hat{a}_h\big(\omega,v;t\big):=\sum_{i=1}^N\Big( F(\omega(x_{i+\frac{1}{2}}))-F\big(\omega(x_{i-\frac{1}{2}})\big)\Big)L_hv(x_i)+ \big(c(t)\omega,L_h\omega\big),
\end{equation}
with $F$ denoting the continuous flux defined as follows:
\begin{equation}
F\big(\omega(x_{i+\frac{1}{2}})\big):=-a(t)x_{i+\frac{1}{2}}^2\frac{\partial \omega}{\partial x}(x_{i+\frac{1}{2}})-b(t)x_{i+\frac{1}{2}}\omega\big(x_{i+\frac{1}{2}}\big)~~~~~~~i=0,...,N.
\end{equation}
Note that  \eqref{dexact} is a representation of the exact solution on the dual partition $K_i$ and will play a  key role in our error analysis.
\subsection{The Two Point Flux Approximation (TPFA) method}
\label{TPFA}
We  are now ready  to approximate $u$ in the  dual partition $K_i$.  Indeed we denote by
$u(x_i,t)\approx u_i$. 
To approximate some integral terms of (\ref{eq-diff-int}), we use the mid-quadrature rule as follows: 
\begin{eqnarray}
\int_{K_i}\frac{\partial u}{\partial \tau}dx \approx l_i\frac{d u_i}{d \tau},\;\;\;
\int_{K_i}cudx \approx l_icu_i,\;\;
\int_{K_i} f(x,t) dx \approx l_if_i~~~~~~~~~~f_i=f(x_i,t).
\end{eqnarray}
Since the Black-Scholes equation (\ref{BS-PDE}) is  one dimensional in space, instead of the Multi-Point Flux Approximation (MPFA) method we use the Two-Point Flux Approximation (TPFA) method to approximate the following  second term of (\ref{eq-diff-int})   
\begin{equation}
\label{diff-term}
\int_{K_i}\frac{\partial}{\partial x}\Bigg[ax^2\frac{\partial u}{\partial x}\Bigg]dx.
\end{equation}

The Two-Point Flux Approximation (TPFA) method is used to approximate the term (\ref{diff-term}) as follows:
\begin{eqnarray}
	\int_{K_i}\frac{\partial}{\partial x}\Bigg[a(t)x^2\frac{\partial u}{\partial x}\Bigg]dx & = & \Bigg[a(t)x^2\frac{\partial u}{\partial x}\Bigg]_{x_{i-\frac{1}{2}}}^{x_{i+\frac{1}{2}}} 
	 =  a(t)x^2{\frac{\partial u}{\partial x}}\Bigg\vert_{x_{i+\frac{1}{2}}}-a(t)x^2{\frac{\partial u}{\partial x}}\Bigg\vert_{x_{i-\frac{1}{2}}}. \nonumber
\end{eqnarray}
Let us set 
\begin{eqnarray}
\label{exp}
H(x)=k(x,t)\frac{\partial u}{\partial x}~~~~~~~~~~~~~with~~~~~k(x,t)=\frac{1}{2}\sigma^2(t)x^2. 
\end{eqnarray}
Over an interval $K_i$, $k(x,t)$ in (\ref{exp}) will be replaced by its average value defined as follows:
\begin{equation}
k_i= \frac{1}{\mathrm{meas}(K_i)}\int_{K_i}\frac{1}{2}\sigma^2(t)x^2 dx= \frac{1}{6}\sigma^2(t)\frac{x_{i+\frac{1}{2}}^3-x_{i-\frac{1}{2}}^3}{x_{i+\frac{1}{2}}-x_{i-\frac{1}{2}}},
\end{equation}
where $\mathrm{meas}(K_i)$ is the length of the interval $K_i$.
\begin{figure}[!h]
	\label{fig:itv}
	\centering
\begin{tikzpicture}
	\draw (0,0)--(12,0);
	\draw[black,fill=black] (0,0) circle (0.1);
	\draw[black,fill=black] (2,0) circle (0.1);
	\draw[black,fill=black] (4,0) circle (0.1);
	\draw[black,fill=black] (6,0) circle (0.1);
	\draw[black,fill=black] (2,-1) circle (0.1);
	\draw[black,fill=black] (8,0) circle (0.1);
	\draw[black,fill=black] (10,0) circle (0.1);
	\draw[black,fill=black] (12,0) circle (0.1);
	\node[below,black] at (0,-0.2){$x_{i-1}$};
	\node[below,black] at (4,-0.2){$x_{i}$};
	\node[below,black] at (8,-0.2){$x_{i+1}$};
	\node[above,black] at (2,0.2){$x_{i-\frac{1}{2}}$};
	\node[above,black] at (6,0.2){$x_{i+\frac{1}{2}}$};
	\node[above,black] at (10,0.2){$x_{i+\frac{3}{2}}$};
	\node[below,black] at (12,-0.2){$x_{i+2}$};
	\draw (2,-1)--(10,-1);
	\node[below,black] at (4,-1.2){$K_{i}$};
	\node[below,black] at (8,-1.2){$K_{i+1}$};
	\draw[black,fill=black] (2,-1) circle (0.1);
	\draw[black,fill=black] (6,-1) circle (0.1);
	\draw[black,fill=black] (10,-1) circle (0.1);
\end{tikzpicture}
\caption{Interval}
\end{figure}
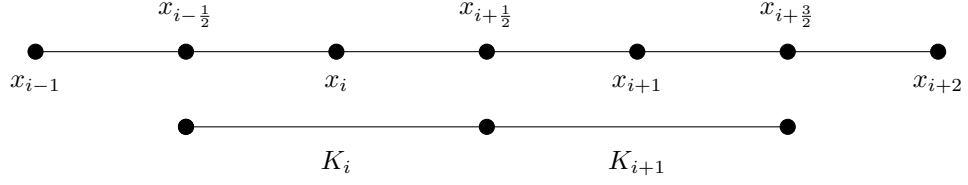
Thereby from  Figure (\ref{fig:itv}), $H_{i+\frac{1}{2}}$ can be evaluated at each side  of $x_{i+\frac{1}{2}}$ as follows:
\begin{eqnarray}
\label{Hi}
	H_{i+\frac{1}{2}}=2k_i\frac{u_{i+\frac{1}{2}}-u_i}{l_i},\;\;
	H_{i+\frac{1}{2}}=2k_{i+1}\frac{u_{i+1}-u_{i+\frac{1}{2}}}{l_{i+1}}.
\end{eqnarray} 
Using the continuity of the flux at the interface $x_{i+\frac{1}{2}}$, we can equate  the two terms  in(\ref{Hi}). This leads to 
\begin{equation}
u_{i+\frac{1}{2}}=\frac{\frac{k_i}{l_i}u_i+\frac{k_{i+1}}{l_{i+1}}{u_{i+1}}}{\frac{k_i}{l_i}+\frac{k_{i+1}}{l_{i+1}}}.
\end{equation}
By setting   $T_i=\frac{k_i}{l_i}$, we can rewrite $H_{i+\frac{1}{2}}$  in (\ref{Hi}) as follows:
\begin{eqnarray}
\label{trans}
H_{i+\frac{1}{2}}=\tau_{i+\frac{1}{2}}(u_{i+1}-u_i),\;\;
\tau_{i+\frac{1}{2}}=\frac{2T_iT_{i+1}}{T_i+T_{i+1}}.
\end{eqnarray}
Besides, to approximate the third integral term of (\ref{eq-diff-int}), we use the upwind method as follows:
\begin{eqnarray}
\int_{K_i}\frac{\partial}{\partial x}\Big[b(t)xu\Big]dx & =  & \Big[b(t)xu\Big]_{x_{i-\frac{1}{2}}}^{x_{i+\frac{1}{2}}} 
      \approx b(t)\Big(x_{i+\frac{1}{2}}u_{i+\frac{1}{2}}-x_{i-\frac{1}{2}}u_{i-\frac{1}{2}}\Big) 
\end{eqnarray}
with 
\begin{equation}
u_{i+\frac{1}{2}}=\left\lbrace \begin{array}{lcr}
u_i  & ~~~~~~if~~b>0\\
u_{i+1} &~~~~~~if~~b<0
\end{array} 
\right.	
\end{equation}
Therefore, the discrete formulation of \eqref{eq-diff-int} is:
\begin{equation}
\label{form-TPFA}
l_i\frac{d u_i}{d\tau}+ F_h(u_{i+\frac{1}{2}})-F_h(u_{i-\frac{1}{2}})-l_icu_i=l_if_i~~~~~~i=1,...,N,
\end{equation} 
where 
\begin{eqnarray}
\label{disc-flux-TPFA}
F_h(u_{i+\frac{1}{2}})   =  -\tau_{i+\frac{1}{2}}(u_{i+1}-u_i)-x_{i+\frac{1}{2}}\Big(b^+u_i+b^-u_{i+1}\Big),\;\;
F_h(u_{i-\frac{1}{2}})   = -\tau_{i-\frac{1}{2}}(u_i-u_{i-1})-x_{i-\frac{1}{2}}\Big(b^+u_{i-1}+b^-u_i\Big),
\end{eqnarray}
with  $b^+=\max(b,0)$ and $b^-=\min(b,0)$.
Moreover, in order to analyse the above scheme, it is convenient to rewrite it in a discrete variational form. Multiplying equation  \eqref{form-TPFA} by arbitrary real numbers $v_i$ and summing the result over all the intervals $K_i$ of $\Omega$, we get:
\begin{equation}
\label{var-form-TPFA}
\sum_{i=1}^N l_i\frac{d u_i}{d \tau}v_i+ \sum_{i=1}^N \Big(F_h(u_{i+\frac{1}{2}})-F_h(u_{i-\frac{1}{2}})\Big)v_i +c\sum_{i=1}^Nl_iu_iv_i=\sum_{i=1}^Nl_if_iv_i.
\end{equation}
Let us denote by $V_h \subset H^1_{\omega}(\Omega)$ the space of continuous functions that are piecewise continuous over the  the grid $(K_i)$ of $\Omega$. Thereby, the TPFA method \eqref{form-TPFA} 
is  equivalent to
\begin{eqnarray}
\label{bila}
a_h(u_h,v_h)=a_h^1(u_h,v_h)+a_h^2(u_h,v_h)+c\sum_{i=1}^Nl_iu_iv_i~~~~~u_h,v_h\in V_h,
\end{eqnarray}
with
\begin{equation}
\label{bila-1}
a_h^1(u_h,v_h) = \sum_{i=1}^N \Bigg[-\tau_{i+\frac{1}{2}}(u_{i+1}-u_i)+\tau_{i-\frac{1}{2}}(u_i-u_{i-1})\Bigg]v_i,
\end{equation}
and
\begin{equation}
\label{bila-2}
a_h^2(u_h,v_h)=\sum_{i=1}^N \Bigg[-x_{i+\frac{1}{2}}\Big(b^+u_{i+1}+b^-u_{i}\Big) +x_{i-\frac{1}{2}}\Big(b^+u_{i-1}+b^-u_{i}\Big)\Bigg]v_i.
\end{equation}
Let us notice that we can rewrite the bilinear form in (\ref{bila}) as:
\begin{equation}
\label{bilaflux-TPFA}
a_h(u_h,v_h)=\sum_{i=1}^N \Big(F_h(u_{i+\frac{1}{2}})-F_h(u_{i-\frac{1}{2}})\Big)v_i+c\sum_{i=1}^Nl_iu_iv_i,
\end{equation}
where $F_h$ is the discrete flux in given in  \eqref{disc-flux-TPFA}.
\subsection{The fitted Two Point Flux Approximation method}
\label{fit-TPPFA}
Since the PDE  (\ref{self-adj}) is degenerated when $x$ approaches zero, the second term of (\ref{eq-diff-int}), at the point $x=x_{1/2}$, will be approximated  using the fitted finite volume method introduced \cite{wang2004novel}. 
The fitted finite volume method will consist to solve a two-point value problem over the interval $K_1$. Thereby,  from \cite{wang2004novel}, we have 
\begin{eqnarray}
ax^2\frac{ \partial u}{ \partial x} +bxu\Bigg\vert_{x_{1/2}} &  \approx  & \frac{1}{4}x_1[(a+b)u_1-(a-b)u_0].
\end{eqnarray}
On the the rest on the study domain ($K_i,~i=2,..,N$), we apply the TPFA method coupled to the upwind method introduced in section \ref{TPFA}.
Therefore, the discrete approximation of (\ref{eq-diff-int})  by the fitted TPFA  method is given
\begin{equation}
\label{form-fitTPFA}
l_i\frac{d u_i}{d\tau}+ G_h(u_{i+\frac{1}{2}})-G_h(u_{i-\frac{1}{2}})-l_icu_i=l_if_i~~~~~~i=1,...,N,
\end{equation} 
where 
\begin{eqnarray}
\label{disc-flux-fitTPFA}
G_h(u_{i+\frac{1}{2}})  & = & -\tau_{i+\frac{1}{2}}(u_{i+1}-u_i)-x_{i+\frac{1}{2}}\Big(b^+u_i+b^-u_{i+1}\Big) \nonumber\\
G_h(u_{i-\frac{1}{2}})  & = & -\tau_{i-\frac{1}{2}}(u_i-u_{i-1})-x_{i-\frac{1}{2}}\Big(b^+u_{i-1}+b^-u_i\Big),~~~~i\neq 1  \\
G_h(u_{1/2}) & =  &  -\frac{1}{4}x_1(a+b)u_1, \nonumber
\end{eqnarray}
with $b^+=\mathrm{max}(b,0)$ and $b^-=\mathrm{min}(b,0)$.
Thereby, the discrete spatial formulation is given by
\begin{equation}
\label{var-form-fitTPFA}
\sum_{i=1}^N l_i\frac{ d u_i}{ d\tau}v_i+ \sum_{i=1}^N \Big(G_h(u_{i+\frac{1}{2}})-G_h(u_{i-\frac{1}{2}})\Big)v_i +c\sum_{i=1}^Nl_iu_iv_i=\sum_{i=1}^Nl_if_iv_i.
\end{equation} 
We define the corresponding bilinear form $b_h$ by:
\begin{equation}
\label{bilb}
b_h(u_h,v_h)=b_h^1(u_h,v_h)+b_h^2(u_h,v_h)+c\sum_{i=1}^Nl_iu_iv_i~~~~~u_h,v_h\in V_h,
\end{equation}
with
\begin{equation}
\label{bilb-1}
b_h^1(u_h,v_h) = \sum_{i=1}^N \Big[-\tau_{i+\frac{1}{2}}(u_{i+1}-u_i)\Big]+\sum_{i=2}^N\Big[\tau_{i-\frac{1}{2}}(u_i-u_{i-1})\Big]v_i,
\end{equation}
and
\begin{equation}
\label{bilb-2}
b_h^2(u_h,v_h)  =   \frac{1}{4}x_1(a+b)u_1^2+\sum_{i=1}^N \Big[-x_{i+\frac{1}{2}}\Big(b^+u_i+b^-u_{i+1}\Big)\Big]v_i +\sum_{i=2}^N \Big[ x_{i-\frac{1}{2}}\Big(b^+u_{i-1}+b^-u_{i}\Big)\Bigg]v_i.
\end{equation}
The bilinear form $b_h$ in (\ref{bilb}) can be rewritten  as:
 \begin{equation}
\label{bilbflux-fitTPFA}
b_h(u_h,v_h)=\sum_{i=1}^N \Big(G_h(u_{i+\frac{1}{2}})-G_h(u_{i-\frac{1}{2}})\Big)v_i+c\sum_{i=1}^Nl_iu_iv_i,
\end{equation}
where   $G_h$ is the discrete flux given by the fitted TPFA in \eqref{disc-flux-fitTPFA}.
\subsection{Coercivity and Flux consistency for  TPFA and fitted TPFA}
 Let us  denote by  $  \big( \cdot , \cdot \big)_{h} $ the scalar product on $C(\overline{\Omega})\supset V_h$ by 
\begin{equation}
\big( u,v\big)_{h}=(L_hu,L_hv) = \sum_{i=1}^N l_iu_iv_i~~~~~~~~~~u,v\in C(\overline{\Omega}),
\end{equation}
 and its corresponding norm $\vert\vert \cdot \vert\vert_{0,h}$ by
  \begin{equation}
 \label{norm-0h}
 ||v||_{0,h}^2 = \sum_{i=1}^N l_i v_i^2,
 \end{equation}
 we define the discrete $H_0^1(\Omega)$ norm by 
\begin{equation}
\label{norm-1h}
\vert\vert u_h \vert\vert^2_{0,\omega} =\sum_{i=1}^N \tau_{i+\frac{1}{2}}\vert u_{i+1}-u_i \vert^2,
\end{equation}
and  weighted discrete $H^1_{\omega}-$ norm is then
\begin{equation}
\label{w-norm}
\vert\vert u_h \vert\vert^2_{\omega,d} = \vert\vert u_h \vert\vert^2_{0,\omega}+ \vert\vert u_h \vert\vert^2_{0,h}.
\end{equation}
Indeed it is easy to show that  $\vert \vert .  \vert \vert_{0,\omega}$ is a semi-norm in $V_h$  since $\tau_{i+\frac{1}{2}}>0$.
\begin{theo}{[Coercivity of  bilinear forms]}\\
	Under the regularity of the  mesh (see Assumption \ref{assum2}) and Assumption \ref{assum1}, there exists a constant $\alpha>0$ independent of h such that,
	\begin{equation}
	\label{coerc-TPFA}
	  a_h(u_h,u_h) \geq \alpha \vert\vert u_h \vert\vert_{\omega,d} ~~~~~~~~\forall u_h \in V_h.
	\end{equation}
	where $a_h$ is the bilinear form given by \eqref{bila} for the TPFA method.
Simlilarly, when the fitted TPFA method \eqref{form-fitTPFA}
is  used for the space discretization, there  exists a constant $\gamma>0$ independent of h such that
\begin{equation}
\label{coerc-fitTPFA}
b_h(u_h,u_h) \geq \gamma \vert\vert u_h \vert\vert_{\omega,d} ~~~~~~~~\forall u_h \in V_h.
\end{equation}
where the bilinear form $b_h$ is given by  \eqref{bilb}. 
\end{theo}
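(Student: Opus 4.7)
The plan is to imitate, at the discrete level, the continuous coercivity argument from Lemma \ref{lem1}: summation by parts plays the role of integration by parts, and the same cancellation between the convective term and the reaction term $c u$ that produced the bound $\tfrac{1}{2}(3r+2\beta-\sigma^2)(v,v)$ in the proof of Lemma \ref{lem1} should appear discretely. I will treat the TPFA case first and then indicate the modifications needed for the fitted TPFA case. Throughout, I use the homogeneous boundary convention $u_0 = u_{N+1}=0$ inherited from $V_h \subset H^1_{0,\omega}(\Omega)$.

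\textbf{Step 1: the diffusive part $a_h^1$.} Set $d_{i+1/2}:=\tau_{i+1/2}(u_{i+1}-u_i)$. Then $a_h^1(u_h,u_h) = \sum_{i=1}^N(d_{i-1/2}-d_{i+1/2})u_i$. A standard Abel (summation-by-parts) rearrangement, together with the vanishing of $u_0$ and $u_{N+1}$, collapses this sum to the telescoping identity
\begin{equation*}
a_h^1(u_h,u_h) = \sum_{i=0}^{N}\tau_{i+1/2}(u_{i+1}-u_i)^2 = \|u_h\|_{0,\omega}^2,
\end{equation*}
so the first piece of the target norm is produced exactly.

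\textbf{Step 2: the convective part $a_h^2$ combined with the reaction term.} Using the upwind splitting $b=b^++b^-$, I would rearrange $a_h^2(u_h,u_h)$ by the same discrete Abel trick. In the continuous proof, one showed $\int bxu u' = -\tfrac12\int bv^2$; the discrete analogue will produce a reaction-type term $-\tfrac12 b\sum_i \ell_i u_i^2$ (with the boundary terms killed by $u_0=u_{N+1}=0$) plus a manifestly non-negative numerical-dissipation remainder of the form $\sum_i \tfrac12|b|(x_{i+1/2}+x_{i-1/2})(u_{i+1}-u_i)^2$ coming from the upwinding. Adding the reaction contribution $c\sum_i \ell_i u_i^2$ then yields a combined reaction coefficient $\tfrac12(3r+2\beta-\sigma^2)$, exactly as in \eqref{asp}. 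Under Assumption \ref{assum1}, $2\beta-\sigma^2\ge \underline{\sigma}^2>0$, so this coefficient is bounded below by $\tfrac12\min(\underline{\sigma}^2,3r)>0$, giving a contribution $\ge C_1\|u_h\|_{0,h}^2$ with $C_1$ independent of the mesh.

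\textbf{Step 3: assembly.} Combining Steps 1 and 2 yields $a_h(u_h,u_h)\ge \|u_h\|_{0,\omega}^2 + C_1\|u_h\|_{0,h}^2 \ge \alpha\|u_h\|_{\omega,d}^2$ with $\alpha=\min(1,C_1)$, and Assumption \ref{assum2} is used only to guarantee that the constants hidden in the bounds on $\tau_{i\pm 1/2}$ are mesh-independent (cf.\ the definition of $\tau_{i+1/2}$ in \eqref{trans} and of $k_i$). This proves \eqref{coerc-TPFA}.

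\textbf{Step 4: the fitted TPFA bilinear form $b_h$.} Structurally the argument is identical, since $G_h$ coincides with $F_h$ on every interface except $x_{1/2}$. The only new object is the fitted flux $G_h(u_{1/2})=-\tfrac14 x_1(a+b)u_1$, which after summation by parts contributes the extra boundary quadratic $\tfrac14 x_1(a+b)u_1^2$ (this is exactly the lone term that already appears explicitly in \eqref{bilb-2}). Since $a+b = \tfrac12\sigma^2(t)+b(t)$ and $b=r-\sigma^2$, i.e.\ $a+b=r-\tfrac12\sigma^2$, this term may have either sign, so I expect the main obstacle to lie here: one must either absorb its negative part into the strictly positive reaction coefficient $\tfrac12(3r+2\beta-\sigma^2)\ell_1$ produced in Step 2 (which is easy because $\ell_1$ is bounded), or use the fact that the missing interface at $x_{1/2}$ removes one non-negative term from $\|u_h\|_{0,\omega}^2$ so that the adjusted bound becomes $\sum_{i=1}^N \tau_{i+1/2}(u_{i+1}-u_i)^2$ — which is precisely the definition \eqref{norm-1h}. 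A Cauchy–Schwarz absorption of $|a+b|x_1 u_1^2$ into $\tau_{3/2}(u_2-u_1)^2+C_1\ell_1 u_1^2$ then yields $b_h(u_h,u_h)\ge \gamma\|u_h\|_{\omega,d}^2$, completing the proof of \eqref{coerc-fitTPFA}.
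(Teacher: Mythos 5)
Your Step~1 is exactly the paper's computation for $a_h^1$ (and, with the sum starting at $i=2$, for $b_h^1$), so the diffusive part is fine. The two places where your proposal diverges from what actually happens are Steps~2 and~4.

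In Step~2 you assert that the discrete summation by parts reproduces the continuous identity $\int bxuu'=-\tfrac12\int bu^2$ and hence the combined coefficient $\tfrac12(3r+2\beta-\sigma^2)$ of \eqref{asp}. It does not: with the upwind fluxes of \eqref{bila-2} the cross terms in $b^+$ cancel exactly and the convective part collapses to
\begin{equation*}
a_h^2(u_h,u_h)=b^-\sum_{i=0}^{N}x_{i+\frac12}\bigl(u_{i+1}^2-u_i^2\bigr)=-b^-\sum_{i=1}^{N}l_iu_i^2\;\ge\;0,
\end{equation*}
with no factor $\tfrac12$ and no separate dissipation remainder of the form you describe. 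The coefficient multiplying $\|u_h\|_{0,h}^2$ is therefore $c-b^-=c+\max(-b,0)$, not $\tfrac12(3r+2\beta-\sigma^2)$, and the positivity needed for coercivity comes solely from $c=2r+\beta-\sigma^2>0$, exactly as in the paper. Your conclusion survives, but the identity you invoke to reach it is false as stated.

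Step~4 is where the genuine gap lies. You correctly identify that $\tfrac14x_1(a+b)u_1^2$ with $a+b=r-\tfrac12\sigma^2$ is sign-indefinite, but neither of your proposed fixes is carried out, and the first one (absorbing $|a+b|x_1u_1^2$ into the reaction term $c\,l_1u_1^2$) can fail: since $x_1\le 2l_1$ this requires roughly $c>\tfrac12|a+b|$, which is violated for instance when $r=0$ and $\sigma$ is constant (then $c=0$ while $a+b=-\tfrac12\sigma^2$). The paper's mechanism is different and does not sacrifice any of the diffusive or reaction terms: when $b\ge 0$ one has $a+b=r-\tfrac12\sigma^2\ge\tfrac12\sigma^2>0$ outright, and when $b<0$ the summation by parts of the convective sum in $b_h^2$ leaves behind a boundary contribution $-x_{3/2}b^-u_1^2$ which pairs with $\tfrac14x_1bu_1^2$ to give $b\bigl(\tfrac14x_1-x_{3/2}\bigr)u_1^2>0$, so that $b_h^2(u_h,u_h)\ge 0$ on its own. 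You would need to supply this (or an equivalent) argument to close the fitted case; as written, the sign-indefinite boundary term is flagged but not controlled.

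As a minor point, you prove $a_h(u_h,u_h)\ge\alpha\|u_h\|_{\omega,d}^2$; the theorem as printed has $\|u_h\|_{\omega,d}$ unsquared, which is a typo in the statement that your (and the paper's) derivation both actually correct.
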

\begin{remark}
Note that using  the coercivity properties in \eqref{coerc-TPFA} and \eqref{coerc-fitTPFA}, with the fact that the linear mapping\\ $ v \rightarrow (f, v)_h$ is continuous in $V_h$, the existence and uniqueness of the discrete solution $u_h$ is ensure for both the TPFA and fitted TPFA methods in \eqref{var-form-TPFA} and \eqref{var-form-fitTPFA}.
The proof is done exactly as for  the continuous case (see \cite[Theorem 1.33]{existence}).
\end{remark}
\begin{proof}
Here we distinguish two cases which are:
\underline{$1^{st} case$:} The standard TPFA method  is used for the spatial discretization
In this case, the discrete flux is given by \eqref{disc-flux-fitTPFA} and the corresponding bilinear form  \eqref{bila} is 
\begin{equation*}
a_h(u_h,v_h)=a_h^1(u_h,v_h)+a_h^2(u_h,v_h)+c\sum_{i=1}^Nl_iu_iv_i~~~~~u_h,v_h\in V_h
\end{equation*}
with
\begin{equation*}
a_h^1(u_h,v_h) = \sum_{i=1}^N \Bigg[-\tau_{i+\frac{1}{2}}(u_{i+1}-u_i)+\tau_{i-\frac{1}{2}}(u_i-u_{i-1})\Bigg]v_i
\end{equation*}
and
\begin{equation}
a_h^2(u_h,v_h)=\sum_{i=1}^N \Bigg[-x_{i+\frac{1}{2}}\Big(b^+u_{i+1}+b^-u_{i}\Big) +x_{i-\frac{1}{2}}\Big(b^+u_{i-1}+b^-u_{i}\Big)\Bigg]v_i
\end{equation}
thereby, we have:
\begin{eqnarray}
\label{bil-f-11}
a_h^1(u_h,u_h) & = & \sum_{i=1}^N \Bigg[-\tau_{i+\frac{1}{2}}\big(u_{i+1}-u_i\big)+\tau_{i-\frac{1}{2}}\big(u_i-u_{i-1}\big)\Bigg]u_i \nonumber\\
& =  & \sum_{i=1}^N-\tau_{i+\frac{1}{2}}\big( u_{i+1}-u_i\big)u_i+\sum_{k=0}^{N-1}\tau_{k+\frac{1}{2}}\big(u_{k+1}-u_{k}\big)u_{k+1}\nonumber\\
& = &\tau_{1/2}\big(u_1-u_0\big)u_1+\sum_{i=1}^{N-1}\tau_{i+\frac{1}{2}}\big( u_{i+1}-u_i\big)\big( u_{i+1}-u_i\big)-\tau_{N+\frac{1}{2}}\big(u_{N+1}-u_N\big)u_N\nonumber\\
& =   & \tau_{1/2}\big(u_1-u_0\big)\big(u_1-u_0\big) + \sum_{i=1}^{N-1}\tau_{i+\frac{1}{2}}\big( u_{i+1}-u_i\big)^2+\tau_{N+\frac{1}{2}}\big(u_{N+1}-u_N\big)\big(u_{N+1}-u_N\big)\nonumber\\
&   =  & \tau_{1/2}\big(u_1-u_0\big)^2+\sum_{i=1}^{N}\tau_{i+\frac{1}{2}}\big( u_{i+1}-u_i\big)^2 \nonumber \\
a_h^1(u_h,u_h)&   \geq &  \vert\vert u_h\vert\vert^2_{0,\omega},
\end{eqnarray}	
and also
\begin{eqnarray}
a_h^2(u_h,u_h) &  = & \sum_{i=1}^N \Bigg[-x_{i+\frac{1}{2}}\Big(b^+u_{i+1}+b^-u_{i}\Big) +x_{i-\frac{1}{2}}\Big(b^+u_{i-1}+b^-u_{i}\Big)\Bigg]u_i   \nonumber \\
&  = &  \sum_{i=1}^N -x_{i+\frac{1}{2}}\Big(b^+u_{i+1}+b^-u_{i}\Big)u_i + \sum_{i=1}^Nx_{i-\frac{1}{2}}\Big(b^+u_{i-1}+b^-u_{i}\Big)u_i \nonumber \\
& =  &  \sum_{i=1}^N -x_{i+\frac{1}{2}}\Big(b^+u_{i+1}+b^-u_{i}\Big)u_i + \sum_{i=0}^{N-1}x_{i+\frac{1}{2}}\Big(b^+u_i+b^-u_{i+1}\Big)u_{i+1} \nonumber\\
&  =  & x_{1/2}\Big(b^+u_0+b^-u_{1}\Big)u_{1}+ \sum_{i=1}^{N-1}x_{i+\frac{1}{2}}\Bigg[b^-\big(u_{i+1}^2-u_i^2\Big)\Bigg]-x_{N+\frac{1}{2}}\Big(b^+u_{N+1}+b^-u_{N}\Big)u_i \nonumber \\
&  = & -b^-x_{1/2}\big(u_0^2-u_1^2\big)-b^-\sum_{i=1}^{N-1}x_{i+\frac{1}{2}}
\Big(u_{i}^2-u_{i+1}^2\Big)-b^-x_{N+\frac{1}{2}}\Big(u_N^2-u_{N+1}^2\Big) \nonumber\\
&  = & -b^-\sum_{i=0}^{N}x_{i+\frac{1}{2}}
\Big(u_{i}^2-u_{i+1}^2\Big).\nonumber
\end{eqnarray}
We also have 
\begin{eqnarray}
\label{bil-f-12}
a_h^2(u_h,u_h) &  = & -b^- \Bigg[\sum_{i=0}^N \Big(x_{i-\frac{1}{2}}+l_i\Big)u_i^2-\sum_{i=0}^{N}
x_{i+\frac{1}{2}}u_{i+1}^2\Bigg] \nonumber\\
&   =  & -b^- \Bigg[\sum_{i=-1}^{N-1} \Big(x_{i+\frac{1}{2}}+l_{i+1}\Big)u_{i+1}^2-\sum_{i=0}^{N}
x_{i+\frac{1}{2}}u_{i+1}^2\Bigg] \nonumber\\
&  =  &  -b^-\Bigg[x_{-\frac{1}{2}}u_0^2+\sum_{i=-1}^{N-1}l_{i+1}u_{i+1}^2-x_{N+\frac{1}{2}}u_{N+1}^2\Bigg] \nonumber\\
&  = &  -b^-\sum_{i=0}^Nl_iu_i^2 \nonumber\\
a_h^2(u_h,u_h) & \geq  &  0,
\end{eqnarray}
then, using  \eqref{bil-f-11} and \eqref{bil-f-12}, we get:
\begin{eqnarray}
\label{bila1}
a_h(u_h,u_h) \geq \big\vert\big\vert u_h \big\vert\big\vert_{0,\omega}+c\sum_{i=1}^Nl_iu_i^2 .
\end{eqnarray}
Moreover, we have
\begin{equation}
c=2r+\beta -\sigma^2 > 0,
\end{equation}
therefore, this  yields to
\begin{equation}
a_h(u_h,u_h) \geq \alpha \Big( \big\vert\big\vert  u_h\big\vert\big\vert_{0,\omega}+\big\vert\big\vert u_h \big\vert\big\vert_{0,h}\Big),
\end{equation} 
with $\alpha=\min(1,c)$. Hence we get
\begin{equation}
a_h(u_h,u_h) \geq \alpha  \big\vert\big\vert  u_h\big\vert\big\vert_{\omega,d}.
\end{equation}
\underline{$2^{nd}$ case} The fitted TPFA method is used for the spatial discretization.
  Here, $G_h$ defined in  \eqref{form-fitTPFA}, gives the discrete flux. Thereby, the corresponding bilinear form \eqref{bilb} is  given by
  \begin{equation}
b_h(u_h,v_h)=b_h^1(u_h,v_h)+b_h^2(u_h,v_h)+c\sum_{i=1}^Nl_iu_iv_i~~~~~u_h,v_h\in V_h
\end{equation}
  \begin{equation}
\label{}
b_h^2(u_h,v_h)  =   \frac{1}{4}x_1(a+b)u_1^2+\sum_{i=1}^N \Big[-x_{i+\frac{1}{2}}\Big(b^+u_i+b^-u_{i+1}\Big)\Big]v_i +\sum_{i=2}^N \Big[ x_{i-\frac{1}{2}}\Big(b^+u_{i-1}+b^-u_{i}\Big)\Bigg]v_i
\end{equation}
with
\begin{eqnarray}
\label{bilb-1-bound}
b_h^1(u_h,v_h) &  =  &  \sum_{i=1}^N \Big[-\tau_{i+\frac{1}{2}}\big(u_{i+1}-u_i\big)\Big]u_i+\sum_{i=2}^N\tau_{i-\frac{1}{2}}\Big[\big(u_i-u_{i-1}\big)\Big]u_i \nonumber\\
 & = & -\tau_{\frac{3}{2}}\big(u_{2}-u_1\big)u_1+ \sum_{i=2}^N \Bigg[-\tau_{i+\frac{1}{2}}\big(u_{i+1}-u_i\big)+\tau_{i-\frac{1}{2}}\big(u_i-u_{i-1}\big)\Bigg]u_i \nonumber\\
& =  &-\tau_{3/2}\big(u_{2}-u_1\big)u_1+ \sum_{i=2}^N-\tau_{i+\frac{1}{2}}\big( u_{i+1}-u_i\big)u_i+\sum_{k=1}^{N-1}\tau_{k+\frac{1}{2}}\big(u_{k+1}-u_{k}\big)u_{k+1}\nonumber\\
& = &-\tau_{3/2}\big(u_{2}-u_1\big)u_1+\tau_{3/2}\big(u_2-u_1\big)u_2+\sum_{i=2}^{N-1}\tau_{i+\frac{1}{2}}\big( u_{i+1}-u_i\big)^2+\tau_{N+\frac{1}{2}}\big(u_{N+1}-u_N\big)^2\nonumber\\
& =   & \tau_{3/2}\big(u_2-u_1\big)^2 + \sum_{i=2}^{N}\tau_{i+\frac{1}{2}}\big( u_{i+1}-u_i\big)^2 \nonumber\\
              &   =  & \sum_{i=1}^{N}\tau_{i+\frac{1}{2}}\big( u_{i+1}-u_i\big)^2 \nonumber \\
b_h^1(u_h,u_h)&   = &  \vert\vert u_h\vert\vert^2_{0,\omega},
\end{eqnarray}	
and we have also
\begin{eqnarray}
b_h^2(u_h,u_h) & =  & \frac{1}{4}x_1(a+b)u_1^2+\sum_{i=1}^N \Big[-x_{i+\frac{1}{2}}\Big(b^+u_i+b^-u_{i+1}\Big)\Big]u_i +\sum_{i=2}^N \Big[ x_{i-\frac{1}{2}}\Big(b^+u_{i-1}+b^-u_{i}\Big)\Bigg]u_i \nonumber \\
&  =  & \frac{1}{4}x_1(a+b)u_1^2+\sum_{i=1}^N \Big[-x_{i+\frac{1}{2}}\Big(b^+u_i+b^-u_{i+1}\Big)\Big]u_i  +\sum_{i=1}^{N-1} \Big[ x_{i+\frac{1}{2}}\Big(b^+u_{i}+b^-u_{i+1}\Big)\Bigg]u_{i+1} \nonumber \\
& =  & \frac{1}{4}x_1(a+b)u_1^2+\sum_{i=1}^{N-1} x_{i+\frac{1}{2}}b^-\big(u_{i+1}^2-u_i^2\big) -x_{N+\frac{1}{2}}b^-u_N^2 \nonumber\\
b_h^2(u_h,u_h) &   = & \frac{1}{4}x_1(a+b)u_1^2+\sum_{i=1}^{N} x_{i+\frac{1}{2}}b^-\big(u_{i+1}^2-u_i^2\big)
\end{eqnarray}
\begin{enumerate}
	\item  \underline{$1^{st} case$}: $b\geq0$
		if $b>0$ then  $b^-=0$ so
	\begin{eqnarray}
	  b_h^2(u_h,u_h) & =  & \frac{1}{4}x_1(a+b)u_1^2 >0
	\end{eqnarray}
	\item  \underline{$2^{nd} case$}: $b<0$
		\begin{eqnarray}
	 b_h^2(u_h,u_h) &   = & \frac{1}{4}x_1(a+b)u_1^2+\sum_{i=1}^{N} x_{i+\frac{1}{2}}b^-\big(u_{i+1}^2-u_i^2\big) \nonumber \\
	              &   = & \frac{1}{4}x_1(a+b)u_1^2 -b^- \Bigg(\sum_{i=1}^N\Big(x_{i-\frac{1}{2}}+l_i\Big)u_i^2 -\sum_{i=1}^N x_{i+\frac{1}{2}}u_{i+\frac{1}{2}}^2\Bigg) \nonumber \\
	              &  =  & \frac{1}{4}x_1(a+b)u_1^2 -b^- \Bigg(\sum_{i=0}^{N-1}\Big(x_{i+\frac{1}{2}}+l_{i+1}\Big)u_{i+1}^2 -\sum_{i=1}^N x_{i+\frac{1}{2}}u_{i+\frac{1}{2}}^2\Bigg) \nonumber\\
	              &  = &   \frac{1}{4}x_1(a+b)u_1^2 -b^-\big(x_{1/2}+l_1\big)u_1^2 -\sum_{i=1}^{N} l_iu_i^2 -x_{N+\frac{1}{2}}u_{N+1}^2 \nonumber\\
	              &  =  & \frac{1}{4}x_1(a+b)u_1^2-x_{3/2}b^-u_1^2 -b^-\sum_{i=1}^N l_iu_i^2 \nonumber\\
	              &  =  & \frac{1}{4}x_1au_1^2+\big( \frac{1}{4}x_1 b- x_{3/2}b^-\big)u_1^2-b^-\sum_{i=1}^N l_iu_i^2 \nonumber\\
b_h^2(u_h,u_h) &  \geq & 0.
	\end{eqnarray}
	\end{enumerate}
Thus for any $b$, we have
\begin{equation}
\label{bilb-2-bound}
b_h^2(u_h,u_h) \geq 0,
\end{equation}
thereby, using   \eqref{bilb-1-bound} and \eqref{bilb-2-bound} in \eqref{bilb},  we have:
\begin{eqnarray}
b_h(u_h,u_h) \geq \big\vert\big\vert u_h \big\vert\big\vert_{0,\omega}+c\sum_{i=1}^Nl_iu_i^2 .
\end{eqnarray}
Besides, we have
\begin{equation}
c=2r+\beta -\sigma^2 > 0,
\end{equation}
therefore, this yields to
\begin{equation}
b_h(u_h,u_h) \geq \gamma \Big( \big\vert\big\vert  u_h\big\vert\big\vert_{0,\omega}+\big\vert\big\vert u_h \big\vert\big\vert_{0,h}\Big),
\end{equation} 
with $\gamma=\min(1,c)$. Hence we get
\begin{equation}
\label{bilb-bound}
b_h(u_h,u_h) \geq \gamma  \big\vert\big\vert  u_h\big\vert\big\vert_{\omega,d}.
\end{equation}
\end{proof}
%
\begin{prop}{Flux consistency} \\
\label{flux-c}	
Let  $I_h$ be  the interpolation operator defines as follows:
\begin{eqnarray}
\label{interpol-op}
	I_h:C(\overline{\Omega}) & \longrightarrow &  V_h  \nonumber\\
	v      & \mapsto  &  I_hv(x):=\sum_{i=1}^N v(x_i)\phi_{x_i}(x),~~~~~~~~x\in \Omega  \nonumber
\end{eqnarray}
where $\lbrace \phi_{x_i}\rbrace_{i=1}^N$, with $\phi_{x_i}(x_j)=\delta_{ij}$, is the nodal basis to  $\lbrace x_i\rbrace_{i=1}^N,\,~~x_i\in K_i.$
Let  $F$ be the total (continuous) flux function defined for $\omega\in \mathcal{C}(\bar{\Omega})$ as 
\begin{equation}
F\big(\omega(x_{i+\frac{1}{2}})\big):=-k(x_{i+\frac{1}{2}})\frac{\partial \omega}{\partial x}\big(x_{i+\frac{1}{2}}\big)-b x_{i+\frac{1}{2}} \omega\big(x_{i+\frac{1}{2}}\big)~~~~x_{i+\frac{1}{2}}\in\Omega_i.
\end{equation}	
When the TPFA method is applied for the spatial discretization i.e the discrete flux is given by $F_h$ 
defined in \eqref{disc-flux-TPFA},then for $\omega\in H_{0,\omega}^2(\Omega)$,  there is a positive constant $C_1$ such that
\begin{equation}
\label{fluxconst-TPFA}
\Big\vert F\big(w(x_{i+\frac{1}{2}})\big)-F_h\big(I_hw(x_{i+\frac{1}{2}})\big) \Big\vert                                     \leq  C_1 \int_{x_i}^{x_{i+1}}  \Big(\big\vert F'(\omega) \big\vert +\big\vert \omega' \big\vert + \big\vert \omega \big\vert \Big)dx ~~~~~~~~~~~~~~~~~~~i=0,...,N.
\end{equation}  
Similarly, when the fitted TPFA is applied for the spatial discretization i.e the discrete flux is given by $G_h$ defined in \eqref{disc-flux-fitTPFA},then for $\omega\in H_{0,\omega}^2(\Omega)$,  there is a positive constant $C_2$ such that
\begin{equation}
\label{fluxconst-fitTPFA}
\Big\vert F\big(w(x_{i+\frac{1}{2}})\big)-G_h\big(I_hw(x_{i+\frac{1}{2}})\big) \Big\vert                                     \leq  C_2 \int_{x_i}^{x_{i+1}}  \Big(\big\vert F'(\omega) \big\vert +\big\vert \omega' \big\vert + \big\vert \omega \big\vert \Big)dx.~~~~~~~~~~~~~~~~~~~i=0,...,N.
\end{equation}
\end{prop}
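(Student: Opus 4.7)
The plan is to split the flux difference into a diffusive part and a convective part and bound each over the interval $[x_i,x_{i+1}]$. Setting
$D_i := -k(x_{i+\frac12})\omega'(x_{i+\frac12}) + \tau_{i+\frac12}(\omega(x_{i+1}) - \omega(x_i))$
and
$C_i := -bx_{i+\frac12}\omega(x_{i+\frac12}) + x_{i+\frac12}(b^+\omega(x_i) + b^-\omega(x_{i+1}))$,
the identity $F(\omega)(x_{i+\frac12}) - F_h(I_h\omega)(x_{i+\frac12}) = D_i + C_i$ holds, and the decomposition for the fitted scheme coincides with this one for $i\geq 1$.

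For the convective part, using $b = b^+ + b^-$ I rewrite
$C_i = x_{i+\frac12} b^+ (\omega(x_i) - \omega(x_{i+\frac12})) + x_{i+\frac12} b^- (\omega(x_{i+1}) - \omega(x_{i+\frac12}))$
and express each difference as $\int \omega'$. Since $|b|$ is bounded by Assumption~\ref{assum1} and $x_{i+\frac12} \leq x_{\max}$, this gives $|C_i| \leq C \int_{x_i}^{x_{i+1}} |\omega'(s)|\,ds$, contributing precisely to the $|\omega'|$ term in the target bound.

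For the diffusive part with $i\geq 1$, I exploit the identity $k\omega' = -F(\omega) - bx\omega$, which follows directly from the definition of $F$, to write $\omega(x_{i+1}) - \omega(x_i) = -\int_{x_i}^{x_{i+1}} (F(\omega)(s) + bs\omega(s))/k(s)\,ds$. Splitting $F(\omega)(s) = F(\omega)(x_{i+\frac12}) + \int_{x_{i+\frac12}}^s F'(\omega)(t)\,dt$ and using the explicit form $1/\tau_{i+\frac12} = \tfrac12(l_i/k_i + l_{i+1}/k_{i+1})$ with $k_i$ the arithmetic mean of $k$ over $K_i$, the leading terms cancel $k(x_{i+\frac12})\omega'(x_{i+\frac12})$ up to a residual bounded by $C\int_{x_i}^{x_{i+1}}(|F'(\omega)| + |\omega|)\,ds$. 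The mesh quasi-uniformity from Assumption~\ref{assum2} ensures $\tau_{i+\frac12}$ is comparable to $k(x_{i+\frac12})/(x_{i+1}-x_i)$ uniformly in $i$, so the resulting constant is independent of $h$.

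The main obstacle is the boundary case $i=0$, where $k(x) = \tfrac12\sigma^2 x^2$ degenerates at $x_0 = 0$ and the identity $\omega' = -(F(\omega) + bx\omega)/k$ becomes singular. For the standard TPFA scheme, $k_0 = \tfrac16\sigma^2 x_{\frac12}^2$ still remains strictly positive, so $\tau_{\frac12}$ is well-defined; I would argue directly on $[x_0, x_1]$ via a Taylor expansion of $\omega$ around $x_{\frac12}$, observing that both $k(x_{\frac12})\omega'(x_{\frac12})$ and the discrete flux $\tau_{\frac12}(\omega(x_1) - \omega(x_0))$ scale to the same order in $x_{\frac12}$ (using $\omega(x_0)=0$ from $H^1_{0,\omega}$), and absorbing the degeneracy via the weighted regularity of $\omega$. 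For the fitted TPFA scheme this difficulty is bypassed by construction: the flux $G_h(u_{\frac12}) = -\tfrac14 x_1(a+b)u_1$ is precisely the exact flux of the local degenerate two-point problem $(ax^2 u' + bxu)' = 0$ on $[x_0,x_1]$ with $u(x_0)=0$, $u(x_1)=u_1$, as derived in \cite{wang2004novel,wang2006power}. Thus $F(\omega)(x_{\frac12}) - G_h(I_h\omega(x_{\frac12}))$ equals the flux discrepancy between $\omega$ and the exact solution of this local problem, which is controlled by $C\int_{x_0}^{x_1}(|F'(\omega)| + |\omega'| + |\omega|)\,ds$ via standard comparison arguments, closing the estimate \eqref{fluxconst-fitTPFA}.
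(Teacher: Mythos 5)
Your treatment of the convective part is correct and is essentially the paper's estimate of $P_2$ (the unified formula $C_i=x_{i+\frac12}b^{+}(\omega(x_i)-\omega(x_{i+\frac12}))+x_{i+\frac12}b^{-}(\omega(x_{i+1})-\omega(x_{i+\frac12}))$ is just a cleaner way of writing the paper's two cases). The gap is in the diffusive term $D_i$. The paper never divides by $k$: it splits $D_i$ into $P_1=k(x_{i+\frac12})\bigl(\tfrac{\omega(x_{i+1})-\omega(x_i)}{h_i}-\omega'(x_{i+\frac12})\bigr)$ plus $P_3=\bigl(\tau_{i+\frac12}-\tfrac{k(x_{i+\frac12})}{h_i}\bigr)(\omega(x_{i+1})-\omega(x_i))$, Taylor-expands $\omega$ so that $\omega''$ appears multiplied by $k(x_{i+\frac12})=\tfrac12\sigma^2x_{i+\frac12}^2$, compares $x_{i+\frac12}^2$ with $x^2$ via mesh quasi-uniformity, converts $a|x^2\omega''|$ into $|F'(\omega)|+|\omega'|+|\omega|$ through $F'(\omega)=-ax^2\omega''-(2a+b)x\omega'-b\omega$, and disposes of $P_3$ with a crude $O(1)$ bound on $|\tau_{i+\frac12}-k(x_{i+\frac12})/h_i|$ from the transmissibility lemma. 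Your route instead inverts the flux relation, $\omega'=-(F(\omega)+bx\omega)/k$, which is the classical Eymard--Gallou\"{e}t--Herbin consistency argument for \emph{non-degenerate} diffusion, and this is precisely where the degeneracy bites: (i) on the first cell $\int_{x_0}^{x_1}k(s)^{-1}\,ds=\int_{0}^{x_1}\tfrac{2}{\sigma^2s^2}\,ds=+\infty$, so the identity is unusable there; (ii) for $i\ge1$ the residual $\tau_{i+\frac12}\int_{x_i}^{x_{i+1}}\tfrac{|b|\,s\,|\omega(s)|}{k(s)}\,ds\le \tfrac{2|b|\tau_{i+\frac12}}{\sigma^2x_i}\int_{x_i}^{x_{i+1}}|\omega|$ carries a coefficient of order $x_{i+\frac12}^2/(h_ix_i)$, which is not bounded independently of $h$ and $i$; it must be paired against $bx_{i+\frac12}\omega(x_{i+\frac12})$ and the cancellation quantified, which you assert but do not carry out; (iii) after writing $F(\omega)(s)=F(\omega)(x_{i+\frac12})+\int_{x_{i+\frac12}}^{s}F'$, you are left with the point value $F(\omega)(x_{i+\frac12})$ multiplied by $1-\tau_{i+\frac12}\int_{x_i}^{x_{i+1}}k^{-1}$, and a point value can only be absorbed into $\int_{x_i}^{x_{i+1}}(|F'(\omega)|+|\omega'|+|\omega|)$ if that prefactor is $O(h_i)$ uniformly, which again requires proof near the degenerate end.

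The boundary cell is also not closed. For the standard TPFA at $i=0$ your plan ("Taylor expansion around $x_{\frac12}$, absorbing the degeneracy via weighted regularity") is a description of the paper's method rather than an argument; for the fitted flux, "standard comparison arguments" are not standard for a degenerate two-point problem and would need to be spelled out. The paper instead uses $\omega(x_0)=0$ to write $-\tfrac14x_1(a+b)\omega(x_1)=-\tfrac14x_1(a+b)(\omega(x_1)-\omega(x_0))$ and repeats the same three-term decomposition with $k(x_{\frac12})-\tfrac14x_1(a+b)$ playing the role of $\tau_{\frac12}-k(x_{\frac12})/h_0$, every factor being bounded by an explicit constant. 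My recommendation is to abandon the $1/k$ identity altogether and adopt the elementary $P_1+P_3$ decomposition, which treats all cells by the same computation and never divides by the degenerate coefficient.
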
 
Before proving the proposition \eqref{flux-c}, let us state the following lemma:
\begin{lem}
For $i=1,\ldots,N$, there exist two constants $C_3$ and $C_4$ independent of $h$  such that the transmissibility coefficient $\tau_{i+\frac{1}{2}}$ defined in (\ref{trans}) and its inverse are bounded as follows:
\begin{equation}
\label{tau-bounds}
\Big\vert \tau_{i+\frac{1}{2}}\Big\vert \leq  C_3,\;\;\quad \quad
\frac{1}{\tau_{i+\frac{1}{2}}} \leq C_4 h_i~~~~~~~~~~~~~~~~~i=0,\ldots,N.
\end{equation}
\end{lem}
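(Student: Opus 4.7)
The plan is to exploit Assumption \ref{assum1} (uniform bounds on $\sigma$) and Assumption \ref{assum2} (local quasi-uniformity of the mesh) to control $\tau_{i+\frac{1}{2}}$ and its reciprocal via elementary harmonic/arithmetic mean inequalities.

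First, I would rewrite $\tau_{i+\frac{1}{2}}$ in a form that exposes its dependence on $k_i$ and $l_i$ directly. Substituting $T_j = k_j/l_j$ into the definition (\ref{trans}) yields
\begin{equation*}
\tau_{i+\frac{1}{2}} = \frac{2 k_i k_{i+1}}{k_i\, l_{i+1} + k_{i+1}\, l_i},\qquad \frac{1}{\tau_{i+\frac{1}{2}}} = \frac{1}{2}\left(\frac{l_i}{k_i}+\frac{l_{i+1}}{k_{i+1}}\right).
\end{equation*}
Second, I would sandwich $k_i$ from above and below using Assumption \ref{assum1}. Since $k_i$ is the mean of $\tfrac{1}{2}\sigma^2(t) x^2$ over $K_i$, evaluating the cubic difference explicitly gives
\begin{equation*}
\tfrac{1}{6}\underline{\sigma}^2\bigl(x_{i-\frac{1}{2}}^2 + x_{i-\frac{1}{2}}x_{i+\frac{1}{2}} + x_{i+\frac{1}{2}}^2\bigr) \leq k_i \leq \tfrac{1}{2}\bar{\sigma}^2 x_{\max}^2 .
\end{equation*}

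Third, for the upper bound on $\tau_{i+\frac{1}{2}}$, I would apply the elementary inequality $\frac{2ab}{a+b}\leq 2\min(a,b)$ with $a=T_i,b=T_{i+1}$, obtaining $\tau_{i+\frac{1}{2}} \leq 2\min(T_i,T_{i+1}) \leq 2k_i/l_i$. Combining with the upper bound on $k_i$ yields a constant $C_3$ (depending on $\bar{\sigma}$ and $x_{\max}$ only) modulo an $l_i$-factor; this is the subtle point, and it is where Assumption \ref{assum2} must be invoked to transfer the $1/l_i$ onto $h_i$ in a mesh-consistent way, using $l_i = \tfrac{1}{2}(h_{i-1}+h_i) \leq \tfrac{1+c}{2}h_i$ and the symmetric lower bound.

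Fourth, for the bound $1/\tau_{i+\frac{1}{2}} \leq C_4 h_i$, I would plug the lower bound on $k_j$ into the explicit formula for $1/\tau_{i+\frac{1}{2}}$. Local quasi-uniformity (Assumption \ref{assum2}) then gives $l_{i+1} \leq c\, l_i$ and $h_{i+1} \leq c\, h_i$, so that $l_i/k_i + l_{i+1}/k_{i+1}$ is controlled by $h_i$ times a constant depending only on $\underline{\sigma}$, $c$, and the $x_i$'s.

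The main obstacle will be the interval $K_0$ adjacent to the degenerate point $x=0$: there $k_0 = \tfrac{1}{6}\sigma^2 x_{1/2}^2$ is not bounded away from zero, which jeopardises both $\tau_{1/2}$ (too large in the degenerate direction) and $1/\tau_{1/2}$ (no $h_0$-scaling). Consequently I would expect the estimates to hold only for $i\geq 1$, or to require that the constants $C_3,C_4$ absorb powers of $x_{1/2}$; this degeneracy is precisely why the fitted scheme of Section \ref{fit-TPPFA} replaces the standard TPFA flux on the first cell, so the lemma is really only needed on $i\geq 1$ and my proof would restrict to that regime.
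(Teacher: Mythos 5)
Your algebraic set-up coincides with the paper's: both arguments start from the harmonic-mean identities
$\tau_{i+\frac{1}{2}}=\frac{2k_ik_{i+1}}{k_il_{i+1}+k_{i+1}l_i}$ and
$\tau_{i+\frac{1}{2}}^{-1}=\frac{1}{2}\bigl(\frac{l_i}{k_i}+\frac{l_{i+1}}{k_{i+1}}\bigr)$,
use the exact value $k_i=\frac{\sigma^2}{6}\bigl(x_{i-\frac{1}{2}}^2+x_{i-\frac{1}{2}}x_{i+\frac{1}{2}}+x_{i+\frac{1}{2}}^2\bigr)$, and then call on Assumptions \ref{assum1} and \ref{assum2}. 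The divergence is in how the two leftover factors are removed, and this is precisely where your proposal does not close.

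For the first bound you reach $\tau_{i+\frac{1}{2}}\le 2k_i/l_i\le \bar{\sigma}^2x_{\max}^2/l_i$ and assert that Assumption \ref{assum2} ``transfers the $1/l_i$ onto $h_i$.'' It does not: local quasi-uniformity only converts $1/l_i$ into $O(1/h_i)$, which still diverges under refinement, so no $h$-independent $C_3$ emerges from this route. For the second bound, your lower bound on $k_i$ gives $l_i/k_i\le 6\,l_i/(\underline{\sigma}^2 x_{i+\frac{1}{2}}^2)$, and the factor $x_{i+\frac{1}{2}}^{-2}$ is not uniformly bounded over admissible meshes --- your own caveat that the constant depends ``on the $x_i$'s'' concedes exactly this, and restricting to $i\ge 1$ does not repair it, since $x_{3/2}\to 0$ as the mesh near the origin is refined. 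You should be aware that these are the very two quantities the paper's proof disposes of by writing $x_{i+\frac{1}{2}}/l_i=(1-X_i)^{-1}$ and $x_{\max}^2/x_{i+\frac{1}{2}}^2=W_i^{-2}$ and declaring them bounded by $2+\mathbf{M_1}$ and $2+\mathbf{M_2}$ via the ``Taylor expansions'' (\ref{xtayl-const}) and (\ref{yztayl-const}); those estimates are not uniform for $X_i,W_i\in(0,1)$, so the paper obtains its constants only by fiat. Indeed, the first inequality essentially requires $x_{i+\frac{1}{2}}^2\lesssim l_i$ while the second requires $x_{i+\frac{1}{2}}^2\gtrsim l_i/h_i$, and no refinable mesh satisfies both for all $i$; so the steps you leave open are not an oversight on your part but cannot be completed as the lemma is stated. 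As a review of the proposal itself: it reproduces the paper's computation up to the two critical estimates and then stops short, so it does not constitute a proof of the stated bounds.
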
 
  \begin{proof}
 In one hand, we have
\begin{eqnarray*}
	\Big\vert \tau_{i+\frac{1}{2}}\Big\vert  & =  & \frac{k_ik_{i+1}}{l_{i+1}k_i+l_ik_{i+1}}  \\
	&   = & \frac{\sigma^2}{6} \frac{\Big(x_{i+\frac{1}{2}}^3-x_{i-\frac{1}{2}}^3\Big)\Big(x_{i+\frac{3}{2}}^3-x_{i+\frac{1}{2}}^3\Big)}{l_{i+1}^2\Big(x_{i+\frac{1}{2}}^3-x_{i-\frac{1}{2}}^3\Big)+l_i^2\Big(x_{i+\frac{3}{2}}^3-x_{i+\frac{1}{2}}^3\Big)}\\
	&  \leq  & \frac{\sigma^2}{6}\frac{\Big(x_{i+\frac{1}{2}}^3-x_{i-\frac{1}{2}}^3\Big)}{l_i^2},
\end{eqnarray*}
so
\begin{eqnarray*}
	\Big\vert \tau_{i+\frac{1}{2}}\Big\vert    &  \leq  &  \frac{\sigma^2}{6}\frac{\Big(x_{i+\frac{1}{2}}^3-x_{i-\frac{1}{2}}^3\Big)}{\Big(x_{i+\frac{1}{2}}-x_{i-\frac{1}{2}}\Big)^2} \\
	&  \leq   & \frac{\sigma^2}{6} \frac{x_{i+\frac{1}{2}}^2+x_{i+\frac{1}{2}}x_{i-\frac{1}{2}}+x_{i-\frac{1}{2}}^2}{x_{i+\frac{1}{2}}-x_{i-\frac{1}{2}}} \\
	%
	%
	&  \leq  & \frac{\sigma^2}{6} \frac{x_{i+\frac{1}{2}}\Bigg(1+\frac{x_{i-\frac{1}{2}}}{x_{i+\frac{1}{2}}}+\left(\frac{x_{i-\frac{1}{2}}}{x_{i+\frac{1}{2}}}\right)^2\Bigg)}{1-\frac{x_{i-\frac{1}{2}}}{x_{i+\frac{1}{2}}}}\\
	&  \leq & \frac{\sigma^2 x_{\max}}{6} \frac{1}{1-X_i} \Bigg(1+X_i+X_i^2\Bigg).
\end{eqnarray*}
Let us define 
\begin{equation}
X_{i}=\dfrac{x_{i-\frac{1}{2}}}{x_{i+\frac{1}{2}}}~~~~~~~with~~~0 < X_i< 1.
\end{equation}
Thereby, using the Taylor expansion, we have
\begin{equation}
\label{xtayl-O}
\dfrac{1}{1-X_i}= 1+ X_i+\mathcal{O}(X_i^2).
\end{equation}
Then there exists a constant $M_1$ such that 
\begin{equation}
\label{xtayl-const}
\dfrac{1}{1-X_i} \leq 1+ X_i+\mathbf{M_1}X_i^2 \leq 2+\mathbf{M_1}.
\end{equation}
Since $0\leq X_i\leq 1$, we have also
\begin{equation}
\label{xtayl-const-M}
0 \leq 1+X_i+X_i^2 \leq 3,
\end{equation}
thus using  $X_i$ in (\ref{xtayl-const})  and (\ref{xtayl-const-M}), we therefore have
\begin{equation}
\label{tau-bound}
\Big\vert \tau_{i+\frac{1}{2}}\Big\vert  \leq C_3 
\end{equation}
 with $C_3=\frac{\beta}{2}\big(2+\mathbf{M}_1\big) x_{\max}$.  In another hand we have:
\begin{eqnarray}
\label{inv-trans-expr1}
\frac{1}{\tau_{i+\frac{1}{2}}}  &  =   &  \frac{l_{i+1}k_i+l_ik_{i+1}}{k_ik_{i+1}} \nonumber\\
&  = &   \frac{l_{i+1}\times\frac{\sigma^2}{6l_i}\Big(x_{i+\frac{1}{2}}^3-x_{i-\frac{1}{2}}^3\Big)
	+l_i\times\frac{\sigma^2}{6l_{i+1}}\Big(x_{i+\frac{3}{2}}^3-x_{i+\frac{1}{2}}^3\Big)}{\frac{\sigma^2}{6l_i}\Big(x_{i+\frac{1}{2}}^3-x_{i-\frac{1}{2}}^3\Big)\cdot \frac{\sigma^2}{6l_{i+1}}\Big(x_{i+\frac{3}{2}}^3-x_{i+\frac{1}{2}}^3\Big)} \nonumber\\
&  =   & \frac{36l_il_{i+1}\Bigg(l_{i+1}\times\frac{\sigma^2}{6l_i}\Big(x_{i+\frac{1}{2}}^3-x_{i-\frac{1}{2}}^3\Big)
	+l_i\times\frac{\sigma^2}{6h_{i+1}}\Big(x_{i+\frac{3}{2}}^3-x_{i+\frac{1}{2}}^3\Big)\Bigg)}{\sigma^4\Big(x_{i+\frac{1}{2}}^3-x_{i-\frac{1}{2}}^3\Big)\Big(x_{i+\frac{3}{2}}^3-x_{i+\frac{1}{2}}^3\Big)}\nonumber\\
&   =  & \frac{6\sigma^2l_{i+1}^2\Big(x_{i+\frac{1}{2}}^3-x_{i-\frac{1}{2}}^3\Big)+6\sigma^2l_{i}^2\Big(x_{i+\frac{3}{2}}^3-x_{i+\frac{1}{2}}^3\Big)}{\sigma^4\Big(x_{i+\frac{1}{2}}^3-x_{i-\frac{1}{2}}^3\Big)\Big(x_{i+\frac{3}{2}}^3-x_{i+\frac{1}{2}}^3\Big)} \nonumber\\
&  = & \frac{6l_{i+1}^2}{\sigma^2x_{i+\frac{3}{2}}^3\Bigg(1-\Big(\frac{x_{i+\frac{1}{2}}}{x_{i+\frac{3}{2}}}\Big)^3\Bigg)}+\frac{6l_{i}^2}{\sigma^2x_{i+\frac{1}{2}}^3\Bigg(1-\Big(\frac{x_{i-\frac{1}{2}}}{x_{i+\frac{1}{2}}}\Big)^3\Bigg)} \nonumber\\
& =  & \frac{6}{\sigma^2x_{i+\frac{3}{2}}^2\Bigg(1-\Big(\frac{x_{i+\frac{1}{2}}}{x_{i+\frac{3}{2}}}\Big)^3\Bigg)} \times \frac{l_{i+1}^2}{x_{i+\frac{3}{2}}}+\frac{6}{\sigma^2x_{i+\frac{1}{2}}^2\Bigg(1-\Big(\frac{x_{i-\frac{1}{2}}}{x_{i+\frac{1}{2}}}\Big)^3\Bigg)}  \times \frac{l_i^2}{x_{i+\frac{1}{2}}}\nonumber\\
& \leq &  \dfrac{6}{\sigma^2x_{\max}^2} \cdot \frac{1}{\frac{x_{i+\frac{1}{2}}^2}{x_{\max}^2}}\Bigg[ \frac{1}{1-\Bigg(\frac{x_{i+\frac{1}{2}}}{x_{i+\frac{3}{2}}}\Bigg)^3}\times \frac{l_{i+1}^2}{x_{i+\frac{3}{2}}}
+\frac{1}{1-\Bigg(\frac{x_{i-\frac{1}{2}}}{x_{i+\frac{1}{2}}}\Bigg)^3}\times \frac{l_i^2}{x_{i+\frac{1}{2}}}\Bigg].
\end{eqnarray}
Moreover, we have
\begin{eqnarray*}
	l_i=x_{i+\frac{1}{2}}-x_{i-\frac{1}{2}}~~~then~~~l_i \leq x_{i+\frac{1}{2}}~~~& thus &~~~\frac{1}{x_{i+\frac{1}{2}}} \leq \frac{1}{l_i}\\
	& similarly &  \frac{1}{x_{i+\frac{3}{2}}} \leq \frac{1}{l_{i+1}}.
\end{eqnarray*}
Thereby, we have
\begin{eqnarray}
\label{hbound}
\frac{l_i^2}{x_{i+\frac{1}{2}}} \leq l_i~~~~~and~~~~\frac{l_{i+1}^2}{x_{i+\frac{3}{2}}} \leq l_{i+1}.
\end{eqnarray}
Besides, we set 
\begin{equation}
W_i=\frac{x_{i+\frac{1}{2}}}{x_{\max}}~~~~~~~~~~Y_i=\frac{x_{i+\frac{1}{2}}}{x_{i+\frac{3}{2}}}~~~~~~~~~Z_i=\frac{x_{i-\frac{1}{2}}}{x_{i+\frac{1}{2}}}.
\end{equation}
Coming back to  (\ref{inv-trans-expr1}) and using Assumption \ref{assum2}, equations (\ref{hbound}) and  (\ref{ass-sp}) we have 
\begin{eqnarray}
\label{inv-trans-1}
\frac{1}{\tau_{i+\frac{1}{2}}}   &  \leq   &  \dfrac{6}{\sigma^2x_{\max}^2} \cdot \frac{1}{W_i^2}\cdot  \Bigg[\frac{1}{1-Y_i^3}l_{i+1}+\frac{1}{1-Z_i^3}l_i\Bigg] \nonumber\\
&     & \nonumber\\
\frac{1}{\tau_{i+\frac{1}{2}}}  & \leq & \dfrac{6}{\sigma^2x_{\max}^2} \cdot \frac{1}{W_i^2} \cdot  \Bigg[\frac{c}{1-Y_i^3}+\frac{1}{1-Z_i^3}\Bigg] l_{i}.
\end{eqnarray}
Let us notice also  that for $i=0,\ldots,N$
\begin{eqnarray}
0 <W_i < 1~~~~~~~~~~~~~~~~~~0 <Y_i < 1~~~~~~~~~~~~~~~~~~~~~~~~0< Z_i< 1,
\end{eqnarray}
then
\begin{eqnarray}
0<W_i^2< 1~~~~~~~~~~~~~~~~~~0< Y_i^3<1~~~~~~~~~~~~~~~~~~0 < Z_i^3< 1.
\end{eqnarray}
Using the Taylor expansion, we have:
\begin{eqnarray}
\frac{1}{W_i^2}&  =   & 1+(1-W_i^2)+\mathcal{O}(W_i^4) \nonumber\\
\frac{1}{1-Y_i^3}&  =   & 1+Y_i^3+\mathcal{O}(Y_i^6) \nonumber\\
\frac{1}{1-Z_i^3}&  =   & 1+Z_i^3+\mathcal{O}(Z_i^6). \nonumber
\end{eqnarray}
This leads to 
\begin{eqnarray}
\label{yztayl-const}
\frac{1}{W_i^2}&  \leq   & 1+(1-W_i^2)+\mathbf{M_2}W_i^2 \leq 2+\mathbf{M_2}  \nonumber\\
\frac{1}{1-Y_i^3}&  \leq   & 1+Y_i^3+\mathbf{M_3}Y_i^6 \leq 2+\mathbf{M_3}  \\
\frac{1}{1-X_i}&  \leq   & 1+Z_i+ \mathbf{M_4}X_i^2 \leq 2+\mathbf{M_4},  \nonumber
\end{eqnarray}
where $\mathbf{M_2},\mathbf{M_3},\mathbf{M_4}$ are positive constants.\
Using (\ref{inv-trans-1}),and (\ref{yztayl-const}), we get
\begin{eqnarray*}
	\frac{1}{\tau_{i+\frac{1}{2}}} \leq   \frac{6}{\sigma^2x_{\max}^2}\cdot \Bigg(2+\mathbf{M_2}\Bigg)\Bigg(2c+2+\mathbf{M_3}+\mathbf{M_4}\Bigg) l_i,
\end{eqnarray*}
therefore we have
\begin{equation}
\label{inv-trans-bound}
\frac{1}{\tau_{i+\frac{1}{2}}} \leq C_4 h_i,~~~~~~~~~~~~~
\end{equation}
with    $l_i\leq \frac{1}{2}\big(1+c\big)h_i$ for $i=0,\ldots,N$ and 
\begin{equation*}
C_4=\frac{3(1+c)}{\underline{\sigma}^2x_{\max}^2}\cdot \Bigg(2+\mathbf{M_2}\Bigg)\Bigg(2c+2+\mathbf{M_3}+\mathbf{M_4}\Bigg).
\end{equation*}
 \end{proof}
 Let us prove now, proposition \eqref{flux-c}.
 \begin{proof} Here we have two cases which are:
\begin{itemize}
	\item \underline{$1^{st} case$}:The TPFA method  is applied for the spatial discretization.  Thereby, for $i=0,\ldots,N$ we have:
\begin{eqnarray*}
	\Big\vert F_h\big(I_h\omega(x_{i+\frac{1}{2}})\big)-F\big(\omega(x_{i+\frac{1}{2}})\big) \Big\vert  & = & \Bigg\vert -\tau_{i+\frac{1}{2}}\Big(\omega(x_{i+1})-\omega(x_i)\Big)-x_{i+\frac{1}{2}}\Big(b^+ \omega(x_i)+b^-\omega(x_{i+1})\Big) \\
	&    & +k(x_{i+\frac{1}{2}})\omega
	'(x_{i+\frac{1}{2}})+bx_{i+\frac{1}{2}}\omega\big(x_{i+\frac{1}{2}}\big)\Bigg\vert \\
	\Big\vert F_h\big(\omega(x_{i+\frac{1}{2}})\big)-F\big(\omega(x_{i+\frac{1}{2}})\big) \Big\vert &  \leq   &    \Big\vert k(x_{i+\frac{1}{2}}) \Big\vert \cdot \Bigg\vert \omega
	'(x_{i+\frac{1}{2}})- \frac{\omega(x_{i+1})-\omega(x_i)}{h_i} \Bigg\vert\\
	&    &  +  x_{i+\frac{1}{2}}  \Bigg\vert b\omega\big(x_{i+\frac{1}{2}}\big)-\Big(b^+ \omega(x_i)+b^-\omega(x_{i+1})\Big)\Bigg\vert  \\
	&    & +\Bigg\vert  \frac{k(x_{i+\frac{1}{2}})}{h_i}-\tau_{i+\frac{1}{2}} \Bigg\vert \cdot  \Bigg\vert \omega(x_{i+1})-\omega(x_i) \Bigg\vert.
\end{eqnarray*}	
Let us estimate $P_1$ defined as follows:
\begin{equation}
\label{P1}
P_1:=k\big(x_{i+\frac{1}{2}}\big)\Bigg(\frac{\omega(x_{i+1})-\omega(x_i)}{h_i}-\omega'\big(x_{i+\frac{1}{2}}\big)\Bigg).
\end{equation}
Indeed  using  the Sobolev embedding theorem, as we are in dimension 1, $H^2(\Omega) \hookrightarrow  C^1(\Omega)$, using the Taylor expansion with integral remainder,  we have
\begin{equation}
\label{tayexpi+1}
\omega(x_{i+1})=\omega\big(x_{i+\frac{1}{2}}\big)+\frac{h_i}{2}\omega'\big(x_{i+\frac{1}{2}}\big)+\int_{x_{i+\frac{1}{2}}}^{x_{i+1}}\big(x_{i+1}-x\big)\omega''(x)dx.
\end{equation}
Similarly, we have
\begin{equation}
\label{tayexpi}
\omega(x_{i})=\omega\big(x_{i+\frac{1}{2}}\big)-\frac{h_i}{2}\omega'\big(x_{i+\frac{1}{2}}\big)
+\int_{x_{i+\frac{1}{2}}}^{x_{i}}\big(x_{i}-x\big)\omega''(x)dx.
\end{equation}
Using (\ref{tayexpi+1}) and (\ref{tayexpi}), we get 
\begin{equation}
\frac{\omega(x_{i+1})-\omega(x_i)}{h_i}-\omega'(x_{i+\frac{1}{2}})=\frac{1}{h_i}\int_{x_{i}}^{x_{i+\frac{1}{2}}}\big(x_{i}-x\big)\omega''(x)dx+\frac{1}{h_i}\int_{x_{i+\frac{1}{2}}}^{x_{i+1}}\big(x_{i+1}-x\big)\omega''(x)dx.
\end{equation}
Since
\begin{eqnarray*}
	\Bigg\vert \int_{x_{i+\frac{1}{2}}}^{x_{i+1}}\big(x_{i+1}-x\big)\omega''(x)dx \Bigg\vert  &   \leq & \frac{h_i}{2} \int_{x_{i+\frac{1}{2}}}^{x_{i+1}} \vert \omega''  \vert  dx  \\
	\Bigg\vert \int_{x_{i}}^{x_{i+\frac{1}{2}}}\big(x_{i}-x\big)\omega''(x)dx \Bigg\vert  &   \leq & \frac{h_i}{2} \int_{x_{i}}^{x_{i+\frac{1}{2}}} \vert \omega''  \vert  dx ,  
\end{eqnarray*}
then 
\begin{eqnarray*}
	\Bigg\vert  \frac{\omega(x_{i+1})-\omega(x_i)}{h_i}-\omega'\big(x_{i+\frac{1}{2}}\big) \Bigg\vert  &   \leq & \frac{1}{2} \int_{x_i}^{x_{i+1}} \vert \omega'' \vert dx.
\end{eqnarray*}
Besides, we have 
\begin{equation*}
k(x_{i+\frac{1}{2}}) = \frac{1}{2}\sigma^2x_{i+\frac{1}{2}}^2,
\end{equation*}
%
thus
\begin{eqnarray}
\label{boundP1}
\vert P_1  \vert     \leq    \frac{\sigma^2 x_{i+\frac{1}{2}}^2}{4} \int_{x_i}^{x_{i+1}} \vert \omega'' \vert dx 
  \leq  \frac{\sigma^2}{4} \int_{x_i}^{x_{i+1}}  \Big(\frac{x_{i+\frac{1}{2}}}{x}\Big)^2 \vert  x^2\omega'' \vert dx.
\end{eqnarray}
%
For $x\in \Omega_i=[x_i;x_{i+1}]$, we have
\begin{eqnarray}
x_i \leq x \leq x_{i+1}    \Rightarrow  \frac{1}{x_{i+1}} \leq \frac{1}{x}   \leq  \frac{1}{x_i}  
 \Rightarrow   \frac{x_{i+\frac{1}{2}}}{x_{i+1}} \leq \frac{x_{i+\frac{1}{2}}}{x}   \leq  \frac{x_{i+\frac{1}{2}}}{x_i}.
\end{eqnarray}
We have also, using (\ref{ass-sp})
\begin{eqnarray}
\dfrac{x_{i+\frac{1}{2}}}{x_i}    =   \frac{x_i+x_{i+1}}{2x_i}  
   =   \frac{1}{2}\Bigg(2+\frac{h_i}{x_{i-1}+h_{i-1}}\Bigg) \nonumber,
 \end{eqnarray}
   so
   \begin{eqnarray}
\dfrac{x_{i+\frac{1}{2}}}{x_i} \leq    \frac{1}{2}\Bigg(2+\frac{h_i}{h_{i-1}}\Bigg),
 \leq  \frac{1}{2}\Bigg(2+c\Bigg)~~
\dfrac{x_{i+\frac{1}{2}}}{x_i}  \leq  1+\frac{c}{2}. \nonumber\\
\end{eqnarray}
Coming back to (\ref{boundP1}), we have
\begin{equation}
\vert P_1  \vert    \leq  \frac{\sigma^2}{4} \Bigg( 1+\frac{c}{2}\Bigg)^2 \int_{x_i}^{x_{i+1}} \vert  x^2\omega'' \vert dx .
\end{equation}
Since, by definition of $F$, we have
\begin{equation}
F'\Big((\omega(x)\Big) = -a x^2\omega''(x)-(2a+b)x\omega'(x)-b\omega(x),
\end{equation}
then we have
\begin{equation}
a \vert x^2 \omega'' \vert  \leq   \vert F'\big(\omega(x)\big) \vert + \vert (2a+b)x \vert \cdot   \vert \omega'(x) \vert  + \vert b \vert \cdot \vert\ \omega(x) \vert.
\end{equation}
Thus, we obtain
\begin{equation}
\label{bound-P1}
\vert P_1  \vert    \leq C_{14} \int_{x_i}^{x_{i+1}} \Bigg(\vert F'\big(\omega(x)\big) \vert + \vert \omega'(x) \vert + \vert  \omega(x) \vert \Bigg)dx,
\end{equation}
with 
\begin{equation*}
C_{14}=\frac{\sigma^2}{4} \Bigg( 1+\frac{c}{2}\Bigg)^2 \max\Big(\big(\beta+ \bar{r}\big) x_{\max}, \bar{r}+\beta ,1\Big).
\end{equation*}
We have 
\begin{equation}
P_2:= x_{i+\frac{1}{2}}  \Bigg(b\omega\big(x_{i+\frac{1}{2}}\big)-\Big(b^+ \omega(x_i)+b^-\omega(x_{i+1})\Big)\Bigg).
\end{equation}	
\begin{enumerate}
	\item $\underline{1^{st} case}:b>0$: Let us estimate $P_2$ defined as follows:
	\begin{equation}
	\label{P2}
	P_2=bx_{i+\frac{1}{2}}\Big(\omega\big(x_{i+\frac{1}{2}}\big)-\omega(x_i)\Big).
	\end{equation}
	By applying the Taylor theorem with integral remainder we have also 
	\begin{equation}
	\omega(x_{i+\frac{1}{2}}) = \omega(x_i) + \int_{x_i}^{x_{i+\frac{1}{2}}} \omega'(x) dx,
	\end{equation}
	then
    \begin{equation}
	\label{boundP21}
	\vert P_2 \vert  \leq \vert b \vert  x_{\max} \int_{x_i}^{x_{i+1}}   \vert \omega'(x) \vert  dx ,
	\end{equation}
	\item $\underline{1^{st} case}:b<0$:
	\begin{equation}
	P_2=bx_{i+\frac{1}{2}}\Big(\omega\big(x_{i+\frac{1}{2}}\big)-\omega(x_{i+1})\Big).
	\end{equation}
	By applying the Taylor theorem with integral remainder we have also 
	\begin{equation}
	\omega(x_{i+1}) = \omega(x_{i+\frac{1}{2}}) + \int_{x_i}^{x_{i+\frac{1}{2}}} \omega'(x) dx,
	\end{equation}
	then
	\begin{equation}
	\label{boundP22}
	\vert P_2 \vert  \leq \vert b \vert  x_{\max} \int_{x_i}^{x_{i+1}}   \vert \omega'(x) \vert  dx,
	\end{equation}
	From (\ref{boundP21}) and (\ref{boundP22}), we have finally 
	\begin{equation}
	\label{boundP2}
	\vert P_2 \vert  \leq \Big(\bar{r}+\beta\Big)  x_{\max} \int_{x_i}^{x_{i+1}}   \vert \omega'(x) \vert  dx.
	\end{equation}	
\end{enumerate}	
Similarly, we have
\begin{equation}
\label{omeplus}
\omega(x_{i+1}) = \omega(x_i) + \int_{x_i}^{x_{i+1}} \omega'(x) dx.
\end{equation}
then
\begin{equation}
\label{omep}
\vert  \omega(x_{i+1})-\omega(x_i) \vert \leq \int_{x_i}^{x_{i+1}}   \vert \omega'(x) \vert dx.
\end{equation}
Let us bound 
\begin{equation}
\Big\vert \tau_{i+\frac{1}{2}} - \frac{k(x_{i+\frac{1}{2}})}{h_i}\Big\vert.
\end{equation}
Indeed  we have:
\begin{eqnarray}
\label{consts}
\Big\vert \tau_{i+\frac{1}{2}} - \frac{k(x_{i+\frac{1}{2}})}{h_i}\Big\vert &  \leq  &  \Big\vert \tau_{i+\frac{1}{2}}\Big\vert + \Big\vert \frac{k(x_{i+\frac{1}{2}})}{h_i}\Big\vert.
\end{eqnarray}
Here, we have:
\begin{eqnarray}
\Big\vert \frac{k(x_{i+\frac{1}{2}})}{h_i}\Big\vert & = & \frac{\sigma^2 x_{i+\frac{1}{2}}^2}{2h_i} 
  =   \frac{\sigma^2}{4} \frac{(x_i+x_{i+1})^2}{x_{i+1}-x_{i}}
  =    \frac{\sigma^2}{4} \frac{x_{i+1}^2\Big(1+2\frac{x_i}{x_{i+1}}+\big(\frac{x_i}{x_{i+1}}\big)^2\Big)}{x_{i+1}\Big(1-\frac{x_i}{x_{i+1}}\Big)} 
  =   \frac{\sigma^2}{4}x_{i+1}\times \frac{1+2Z_i+Z_i^2}{1-Z_i} \nonumber\\
\Big\vert \frac{k(x_{i+\frac{1}{2}})}{h_i}\Big\vert	&  \leq  & \frac{\sigma^2}{4}x_{\max}\times \Big(1+2Z_i+Z_i^2\Big)\times \frac{1}{1-Z_i}.
\end{eqnarray}
with  $Z_i$ defined as follows
\begin{equation}
Z_i=\frac{x_i}{x_{i+1}}~~~~~~~~~~~~~~~~~0< Z_i < 1,
\end{equation}
using a similar argument as in (\ref{xtayl-const}) and (\ref{xtayl-const-M}), there exists a positive constant $\mathbf{M_4}$ such that 
\begin{equation}
\label{bound-kl}
\Big\vert \frac{k(x_{i+\frac{1}{2}})}{h_i}\Big\vert  \leq   \sigma^2x_{\max}\times \big(2+\mathbf{M_4}\big),
\end{equation}
in the other hand, using (\ref{tau-bound}) we have, 
\begin{equation}
\Big\vert \tau_{i+\frac{1}{2}}\Big\vert  \leq  \frac{\sigma^2}{2}\big(2+\mathbf{M}_1\big) x_{\max}.
\end{equation}
Coming back to (\ref{consts}), and by using (\ref{bound-kl}) and (\ref{tau-bound}) we have 
\begin{eqnarray}
\label{boundtauk}
\Big\vert \tau_{i+\frac{1}{2}} - \frac{k(x_{i+\frac{1}{2}})}{h_i}\Big\vert &  \leq  &  \Big\vert \tau_{i+\frac{1}{2}}\Big\vert + \Big\vert \frac{k(x_{i+\frac{1}{2}})}{h_i}\Big\vert  \nonumber\\
&  \leq  &  \frac{\sigma^2}{2}\big(2+\mathbf{M}_1\big) x_{\max} +  \sigma^2x_{\max}\Big(2+\mathbf{M_4}\Big) \nonumber\\
\Big\vert \tau_{i+\frac{1}{2}} - \frac{k(x_{i+\frac{1}{2}})}{h_i}\Big\vert      &  \leq  &   \sigma^2x_{\max}\Big(\mathbf{M_1}+\mathbf{M_4}+\frac{3}{2}\Big),
\end{eqnarray}
then the estimate of $P_3$  defined as follows
\begin{equation}
P_3:= \Bigg(\tau_{i+\frac{1}{2}}- \frac{k(x_{i+\frac{1}{2}})}{h_i}\Bigg) \Big(\omega_{i+1}-\omega_i\Big),
\end{equation}
is, by using  (\ref{omep}) and (\ref{boundtauk}). We  therefore have
\begin{eqnarray}
\label{boundP3}
\vert P_3 \vert &  \leq & \Bigg\vert \tau_{i+\frac{1}{2}}- \frac{k(x_{i+\frac{1}{2}})}{h_i} \Bigg\vert \Big\vert  \omega_{i+1}-\omega_i  \Big\vert \nonumber \\  
&   \leq &  \sigma^2x_{\max}\Big(\mathbf{M_1}+\mathbf{M_4}+\frac{3}{2}\Big)  \int_{x_i}^{x_{i+1}}   \vert \omega'(x) \vert dx  \nonumber  \\
\vert P_3 \vert   &   \leq  & \sigma^2x_{\max}\Big(\mathbf{M_1}+\mathbf{M_4}+\frac{3}{2}\Big)\int_{x_i}^{x_{i+1}}   \vert \omega'(x) \vert dx.
\end{eqnarray}
Using (\ref{boundP1}),(\ref{boundP2}) and (\ref{boundP3})
\begin{eqnarray}
\label{flux-const-12}
\Big\vert F_h\big(w(x_{i+\frac{1}{2}})\big)-F\big(w(x_{i+\frac{1}{2}})\big) \Big\vert & \leq &   \vert P_1\vert + \vert P_2\vert + \vert P_3 \vert \nonumber \\
&  \leq & C_{14} \int_{x_i}^{x_{i+1}} \Bigg(\vert F'(\omega) \vert + \vert \omega' \vert + \vert  \omega \vert \Bigg)dx+  \vert r-\sigma^2 \vert  x_{\max} \int_{x_i}^{x_{i+1}}   \vert \omega'(x) \vert  dx \nonumber \\
&    & + \sigma^2x_{\max}\Big(\mathbf{M_1}+\mathbf{M_4}+\frac{3}{2}\Big) \int_{x_i}^{x_{i+1}}   \vert \omega'(x) \vert dx  \nonumber \\
\Big\vert F_h\big(w(x_{i+\frac{1}{2}})\big)-F\big(w(x_{i+\frac{1}{2}})\big) \Big\vert	   &   \leq & C_{1}\int_{x_i}^{x_{i+1}} \Bigg(\big\vert F'(\omega) \big\vert + \big\vert \omega' \big\vert+ \big\vert  \omega \big\vert \Bigg)dx,
\end{eqnarray}
with
\begin{equation*}
C_{1}= C_{14}+ \big(\bar{r}+\beta\big) x_{\max}+ \beta x_{\max}\Big(\mathbf{M_1}+\mathbf{M_4}+\frac{3}{2}\Big),
\end{equation*}
\item \underline{$2^{nd} case:$} The fitted TPFA method  is applied for space discretization 
 at $x_{1/2}$ we have:
\begin{eqnarray}
\Big\vert G_h(\omega\big(x_{1/2}\big)-F(\omega\big(x_{1/2}\big)\Big\vert  &  =  &  \Big\vert -\frac{1}{4}x_1(a+b)\omega(x_1) +k(x_{\frac{1}{2}})\omega'\big(x_{\frac{1}{2}}\big)+b x_{\frac{1}{2}} \omega\big(x_{\frac{1}{2}}\big) \Big\vert \nonumber \\
&  =  &  \Big\vert -\frac{1}{4}x_1(a+b)\Big(\omega(x_1)-\omega(x_0)\Big) +k(x_{\frac{1}{2}})\omega'\big(x_{\frac{1}{2}}\big)+b x_{\frac{1}{2}} \omega\big(x_{\frac{1}{2}}\big) \Big\vert \nonumber\\
&  = & \Bigg\vert k(x_{\frac{1}{2}}) \Bigg(\omega'\big(x_{\frac{1}{2}}\big)-\dfrac{\omega(x_{1})-\omega(x_0)}{h_0}\Bigg)+b x_{\frac{1}{2}}\Big(\omega(x_{1/2})-\omega(x_0)\Big)\nonumber\\
&   & + \Big(k(x_{1/2})-\frac{1}{4}x_1(a+b)\Big)\Big(\omega(x_1)-\omega(x_0)\Big) \Bigg\vert \nonumber\\
\Big\vert G_h(\omega\big(x_{1/2}\big)-F(\omega\big(x_{1/2}\big)\Big\vert & \leq  & \big\vert k(x_{\frac{1}{2}}) \big\vert \cdot  \Bigg\vert \omega'\big(x_{\frac{1}{2}}\big)-\dfrac{\omega(x_{1})-\omega(x_0)}{h_0}\Bigg\vert + \vert b \vert  x_{\frac{1}{2}}\Big\vert \omega(x_{1/2})-\omega(x_0)\Big\vert \nonumber \\
&    & \Big\vert k(x_{1/2})-\frac{1}{4}x_1(a+b)\Big\vert \Big\vert \omega(x_1)-\omega(x_0)\Big\vert,  \nonumber
\end{eqnarray}
it follows that:
\begin{eqnarray}
\Big\vert k(x_{1/2})-\frac{1}{4}x_1(a+b)\Big\vert & \leq &  \Big\vert k(x_{1/2})\Big\vert +\Big\vert\frac{1}{4}x_1(a+b)\Big\vert \nonumber\\
&   & \nonumber\\
& \leq  & \Big\vert k(x_{1/2})\Big\vert +\frac{1}{4}x_{\max}\big(\bar{r} +\beta\big). \nonumber 
\end{eqnarray}	
Thereby, using \eqref{bound-kl}, \ref{P1} and \ref{bound-P1}, \eqref{P2} and \ref{boundP21}, we get
\begin{equation}
\Big\vert G_h(\omega\big(x_{1/2}\big)-F(\omega\big(x_{1/2}\big)\Big\vert  \leq  C_2 \int_{x_i}^{x_{i+1}}  \Big(\big\vert F'(\omega) \big\vert +\big\vert \omega' \big\vert + \big\vert \omega \big\vert \Big)dx \nonumber\\
\end{equation}
where
\begin{equation*}
C_2= C_{14}+\Bigg[\big(\bar{r}+\beta\big)+  \Bigg(\beta\Big(\mathbf{M_1}+\frac{5}{2}\Big)+\frac{1}{4}\bar{r}\Bigg)\Bigg]x_{\max}.
\end{equation*}
Besides, since for $i=1,\ldots,N$
\begin{equation}
G_h\big(w(x_{i+\frac{1}{2}})\big)=F_h(w(x_{i+\frac{1}{2}})\big),
\end{equation}	
similarly to \ref{flux-const-12}, we get
\begin{eqnarray}
\label{flux-const}
\Big\vert F\big(w(x_{i+\frac{1}{2}})\big)-G_h\big(I_hw(x_{i+\frac{1}{2}})\big) \Big\vert                               &      \leq & C_2 \int_{x_i}^{x_{i+1}}  \Big(\big\vert F'(\omega) \big\vert +\big\vert \omega' \big\vert + \big\vert \omega \big\vert \Big)dx \nonumber
\end{eqnarray}
with 
\begin{equation*}
C_{2}= C_{14}+\big(\bar{r}+\beta\big)x_{\max}+ \beta x_{\max}\Big(\mathbf{M_1}+\mathbf{M_4}+\frac{3}{2}\Big).
\end{equation*}
Finally, when the fitted TPFA method is applied for the space discretization, there exists a positive constant satisfying   \eqref{fluxconst-fitTPFA}.
\end{itemize}
\end{proof}
\section{Full discretization and errors estimates}
\label{ful-dis}
Let  $0:=t_0<t_1<\ldots<t_{M-1}<t_M:=T$ be a subdivision of the time interval $[0,T]$ with the step sizes $\Delta t_m=t_{m+1}-t_m,~~m\in\{0,\ldots,M-1\}$ and $\Delta t= \max_{1\leq m \leq M-1} \Delta t_m$.
The full discretization of \eqref{self-adj} using the combination of the TPFA method  with the parameter $\theta\in [0,1]$  can be formulated as :
Find a sequence $u_h^1,\ldots,u_h^M\in V_h$ such that for $m\in\{0,\ldots,M-1\}$
\begin{equation}
\label{full-disc-TPFA}
\left\lbrace \begin{array}{lcr}
\Bigg(\dfrac{u^{m+1}_h-u^m_h}{\Delta t_m},v_h\Bigg)_h+a_h\Big(\theta u^{m+1}_h+(1-\theta)u^m_h,v_h;t_{m+\theta}\Big) & = &  \Bigg(\theta f^{m+1}+(1-\theta)f^m,v_h\Bigg)_h\\
~~~~~~~~~~~~~~~~~~~~~~~~~~~~~~~~~~~~~~~~~~~~~~~~~~~~~~~~~~~~~~~~~~~u^0 & = u_{oh} & 
\end{array} 
\right.	
\end{equation}
where $t_{m+\theta}=\theta t_{m+1}+(1-\theta)t_m$  and the bilinear form $a_h$ is given by \eqref{bilaflux-TPFA}.
Similarly, when the fitted TPFA method is applied for the the spatial discretization, the full discretization is formulated as follows:\\
Find a sequence $ u_h^1,\ldots,u_h^M \in V_h$ such that for $m\in\{0,\ldots,M-1\}$
\begin{equation}
\label{full-disc-fitTPFA}
\left\lbrace \begin{array}{lcr}
\Bigg(\dfrac{u^{m+1}_h-u^m_h}{\Delta t_m},v_h\Bigg)_h+b_h\Big(\theta u^{m+1}_h+(1-\theta)u^m_h,v_h;t_{m+\theta}\Big) & = &  \Bigg(\theta f^{m+1}+(1-\theta)f^m,v_h\Bigg)_h\\
~~~~~~~~~~~~~~~~~~~~~~~~~~~~~~~~~~~~~~~~~~~~~~~~~~~~~~~~~~~~~~~~~~~u^0 & = u_{oh} & 
\end{array} 
\right.	
\end{equation}
where the bilinear form $b_h$ is given by \eqref{bilbflux-fitTPFA}.  
\subsection{Errors estimates}
\begin{theo}
\label{maintheo}
Let us consider the unique solution $u$ of  \eqref{weak-form} and $\zeta_h^m$ the numerical solution  of the fully discretized scheme using the TPFA method \eqref{form-TPFA}($\zeta_h^m=u_h^m$ for the TPFA method) or the fitted TPFA method \eqref{form-fitTPFA}  ($\zeta_h^m=z_h^m$ for  fitted TPFA method). Let $\theta \in [1/2;1]$, 
	if $u\in H^1\Big(0,T;H^1(\Omega)\Big) \cap H^2\Big(0,T; L^2(\Omega)\Big)$ and  $F(u) \in C(0,T,H^1(\Omega))$,
  then there exists a positive constant  $C$, independent of $h$, $\Delta t$, $M$, and $ N$ such that 
	\begin{equation}
	\big\vert\big\vert u(t_m)-\zeta_h^m\big\vert\big\vert_{0,h} \leq C(h+\Delta t).
	\end{equation}
\end{theo}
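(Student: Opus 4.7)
The plan is to follow the classical error-splitting strategy for finite-volume schemes applied to parabolic PDEs. I would introduce the interpolated exact solution using the operator $I_h$ from Proposition~\ref{flux-c}, and decompose
\begin{equation*}
u(t_m) - \zeta_h^m = \bigl(u(t_m) - I_h u(t_m)\bigr) + \bigl(I_h u(t_m) - \zeta_h^m\bigr) =: \rho^m + \eta^m.
\end{equation*}
The term $\rho^m$ is a standard interpolation error which can be bounded by $C h$ in the $\|\cdot\|_{0,h}$ norm using the regularity $u \in H^1(0,T;H^1(\Omega))$. The essential work concerns $\eta^m \in V_h$.

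To estimate $\eta^m$, I would start from the representation \eqref{dexact} of the exact solution on the dual partition and subtract the fully discrete scheme \eqref{full-disc-TPFA} (respectively \eqref{full-disc-fitTPFA}), testing against $v_h = \eta^{m+\theta} := \theta\eta^{m+1} + (1-\theta)\eta^m$. This yields an identity of the form
\begin{equation*}
\Bigl(\tfrac{\eta^{m+1}-\eta^m}{\Delta t_m},\eta^{m+\theta}\Bigr)_h + a_h\bigl(\eta^{m+\theta},\eta^{m+\theta};t_{m+\theta}\bigr) = \mathcal{R}_s^m + \mathcal{R}_t^m,
\end{equation*}
where $\mathcal{R}_s^m$ collects the spatial consistency gap between $F(u)$ and the discrete flux $F_h(I_h u)$ (or $G_h(I_h u)$), and $\mathcal{R}_t^m$ collects the temporal truncation from replacing $\dot u(t_{m+\theta})$ by the discrete backward difference and $u(t_{m+\theta})$ by $\theta u^{m+1}+(1-\theta)u^m$. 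The coercivity bounds \eqref{coerc-TPFA}/\eqref{coerc-fitTPFA} then give a lower bound $\alpha\|\eta^{m+\theta}\|_{\omega,d}^2$ on the bilinear form, while the usual algebraic identity for $\theta\in[1/2,1]$ turns the time-difference inner product into $\tfrac{1}{2\Delta t_m}(\|\eta^{m+1}\|_{0,h}^2-\|\eta^m\|_{0,h}^2)$ plus a nonnegative dissipative term.

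For the spatial residual $\mathcal{R}_s^m$, I would rewrite the difference of discrete fluxes as a telescoping sum over the interfaces $x_{i+1/2}$, apply Proposition~\ref{flux-c} interface by interface (handling the degenerate interface $x_{1/2}$ by \eqref{fluxconst-fitTPFA} in the fitted case), and use Abel summation together with a discrete Cauchy--Schwarz inequality in the norm $\|\cdot\|_{0,\omega}$ defined by \eqref{norm-1h} together with the transmissibility bound $1/\tau_{i+1/2}\le C_4 h_i$. This yields
\begin{equation*}
|\mathcal{R}_s^m| \le C h\,\bigl(\|F'(u(t_{m+\theta}))\|_{L^2} + \|u'(t_{m+\theta})\|_{L^2} + \|u(t_{m+\theta})\|_{L^2}\bigr)\,\|\eta^{m+\theta}\|_{\omega,d}.
\end{equation*}
The temporal residual $\mathcal{R}_t^m$ follows from a Taylor expansion around $t_{m+\theta}$; under $u\in H^2(0,T;L^2(\Omega))$ and $\theta\in[1/2,1]$, it contributes an $O(\Delta t)$ term in $\|\cdot\|_{0,h}$. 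Absorbing the $\|\eta^{m+\theta}\|_{\omega,d}$ factor into the coercive term via Young's inequality, summing over $m=0,\ldots,M-1$, and applying a discrete Gronwall inequality gives $\|\eta^M\|_{0,h}\le C(h+\Delta t)$, which combined with the bound on $\rho^M$ yields the theorem.

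The main obstacle I anticipate is handling the weight at the degenerate endpoint $x=0$: the spatial consistency bounds from Proposition~\ref{flux-c} involve $|F'(\omega)|$ rather than an unweighted second derivative, which is precisely why the hypothesis $F(u)\in C(0,T;H^1(\Omega))$ is imposed, and one must carefully keep the $x^2$ weight inside the estimates so that the summed bound over $K_i$ remains finite near $x=0$. A secondary technical point is the fitted case: the modified flux $G_h(u_{1/2})$ at the first interface does not match $F_h(u_{1/2})$, so the telescoping argument for $\mathcal{R}_s^m$ must isolate the contribution of the interval $K_1$ and bound it separately via \eqref{fluxconst-fitTPFA}, ensuring that the fitted correction does not destroy the overall $O(h)$ rate.
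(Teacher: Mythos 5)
Your proposal follows essentially the same route as the paper's proof: the same splitting $u(t_m)-\zeta_h^m=\bigl(u(t_m)-I_hu(t_m)\bigr)+\bigl(I_hu(t_m)-\zeta_h^m\bigr)$, the same comparison of the fully discrete scheme with the dual-grid representation \eqref{dexact} tested against $\theta\eta^{m+1}+(1-\theta)\eta^m$, the same use of coercivity, the Abel-summation treatment of the flux consistency error via Proposition~\ref{flux-c} and the bound $1/\tau_{i+\frac{1}{2}}\le C_4 h_i$, and a discrete Gronwall argument, with the first interface isolated in the fitted case. Your grouping of the residual into spatial and temporal parts $\mathcal{R}_s^m+\mathcal{R}_t^m$ is only a cosmetic repackaging of the paper's $Y_1^m+Y_2^m+Y_3^m$ decomposition, so no substantive difference remains.
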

\begin{proof}
Indeed the proofs follow the same lines  as that  in \cite[Theorem 7]{angermann2007convergence}. We summarise the  keys steps.
Here we have two cases:\\
\underline{$1^{st}$ case}  When the TPFA method is applied for the space discretization.\\
	Here, we take $\zeta_h^m=u_h^m$. Let us notice that 
	\begin{equation}
	\label{err}
	\big\vert\big\vert u(t_m)-u_h^m\big\vert\big\vert_{0,h} \leq  \big\vert\big\vert u(t_m)-I_hu(t_m)\big\vert\big\vert_{0,h} + \big\vert\big\vert I_hu(t_m)-u_h^m\big\vert\big\vert_{0,h}
	\end{equation}
	where $I_h$ is the interpolation operator introduced in \eqref{interpol-op}.
	In one hand, in order to bound the first term on the right hand side of \eqref{err}, let us recall the following result. Since $u(t)\in H^2(\Omega)$ then there exists a constant $C_{31}$ depending on $u$ (see Theorem 3.25, page 138 in \cite{KnaberAngermann2002}) such that
	\begin{equation}
	\big\vert\big\vert I_hu(t)-u(t)\big\vert\big\vert_{0,h} \leq C_{31} \cdot h^2 \cdot \vert u(t)\vert_2
	\end{equation}
	where $\vert \cdot \vert_2$  is the semi-norm of $H^2(\Omega)$. Furthermore, for $u\in C(\big(0,T\big),H^2(\Omega))$, there exists a positive constant $C_{32}=C_{31}(u,T)\cdot x_{\max}$ such that
	\begin{equation}
	\label{interp-err}
	\big\vert\big\vert I_hu(t_m)-u(t_m)\big\vert\big\vert_{0,h} \leq C_{32} \cdot h. 
	\end{equation}
	 We now estimate $W^m:=I_hu(t_m)-u_h^m$ in the discrete $L^2$-norm.
	We define the expression $A$ by 
	\begin{equation}
	A= \Bigg( \dfrac{W^{m+1}-W^m}{\Delta t_m},v_h \Bigg)_h +a_h\Big(\theta W^{m+1}+(1-\theta)W^m,v_h;t_{m+\theta}\Big),
	\end{equation}
	where $a_h$ is the bilinear form given by \eqref{bila} when the TPFA method is applied.
	 By some arithmetic manipulations, we have
	\begin{eqnarray}
	\label{est-A}
	A & = & \sum_{i=1}^N l_i \frac{W^{m+1}_i-W^m_i}{\Delta t_m}v_i+a_h\Big(\theta W^{m+1}+(1-\theta)W^m,v_h;t_{m+\theta}\Big) \nonumber\\
	&  =  & \sum_{i=1}^N l_i\frac{I_hu_i(t_{m+1})-I_hu_i(t_m)}{\Delta t_m}v_i+a_h\Big(\theta I_hu(t_{m+1})+(1-\theta)I_hu(t_m),v_h;t_{m+\theta}\Big) \nonumber\\
	&    & -\sum_{i=1}^N l_i\frac{{u_h}_i^{m+1}-{u_h}_i^m}{\Delta t_m}v_i-a_h\Big(\theta u_h^{m+1}+(1-\theta)u_h^m,v_h;t_{m+\theta}\Big) \nonumber\\
	&  =  & \sum_{i=1}^N l_i\frac{I_hu_i(t_{m+1})-I_hu_i(t_m)}{\Delta t_m}v_i+a_h\Big(\theta I_hu(t_{m+1})+(1-\theta)I_hu(t_m),v_h;t_{m+\theta}\Big) \nonumber\\
	&   & -\sum_{i=1}^N l_i\frac{{u_h}_i^{m+1}-{u_h}_i^m}{\Delta t_m}v_i-a_h\Big(\theta u_h^{m+1}+(1-\theta)u_h^m,v_h;t_{m+\theta}\Big)-\Bigg(\theta \dot{u}(t_{m+1})+\big(1-\theta\big)\dot{u}(t_m),L_hv_h\Bigg) \nonumber\\
	&   & +\Bigg(\theta \dot{u}(t_{m+1})+\big(1-\theta\big)\dot{u}(t_m),L_hv_h\Bigg)+\theta \hat{a}_h\big(u(t_{m+1}),v_h;t_{m+1}\big)+\big(1-\theta\big)\hat{a}_h\big(u(t_m),v_h;t_m\big) \nonumber \\
	&    &  -\theta \hat{a}_h\big(u(t_{m+1}),v_h;t_{m+1}\big)-\big(1-\theta\big)\hat{a}_h\big(u(t_m),v_h;t_m\big).
	\end{eqnarray}
	We also have
	\begin{eqnarray}
	A &  =  & \Bigg[ \sum_{i=1}^N l_i\frac{I_hu_i(t_{m+1})-I_hu_i(t_m)}{\Delta t_m}v_i -\Bigg(\theta \dot{u}(t_{m+1})+\big(1-\theta\big)\dot{u}(t_m),L_hv_h\Bigg)\Bigg] \nonumber \\
	&    & + \Bigg[a_h\Big(\theta I_hu(t_{m+1})+(1-\theta)I_hu(t_m),v_h;t_{m+\theta}\Big)-\theta \hat{a}_h\big(u(t_{m+1}),v_h;t_{m+1}\big)-\big(1-\theta\big)\hat{a}_h\big(u(t_m),v_h;t_m\big)\Bigg] \nonumber\\
	&   & +\Bigg[\Bigg(\theta \dot{u}(t_{m+1})+\big(1-\theta\big)\dot{u}(t_m),L_hv_h\Bigg)+\theta \hat{a}_h\big(u(t_{m+1}),v_h;t_{m+1}\big) +\big(1-\theta\big)\hat{a}_h\big(u(t_m),v_h;t_m\big)\Bigg] \nonumber \\
	&    & -\sum_{i=1}^N l_i\frac{{u_h}_i^{m+1}-{u_h}_i^m}{\Delta t_m}v_i-a_h\Big(\theta u_h^{m+1}+(1-\theta)u_h^m,v_h;t_{m+\theta}\Big).
	\end{eqnarray}
	Remember that (see \eqref{var-form-TPFA})
	\begin{equation}
	\sum_{i=1}^N l_i\frac{{u_h}_i^{m+1}-{u_h}_i^m}{\Delta t_m}v_i+a_h\Big(\theta u_h^{m+1}+(1-\theta)u_h^m,v_h;t_{m+\theta}\Big)= \Bigg(\theta f^{m+1}+(1-\theta)f^m,v_h\Bigg)_h,
	\end{equation}
	and also
	\begin{eqnarray}
	\Bigg(\theta \dot{u}(t_{m+1})+\big(1-\theta\big)\dot{u}(t_m),L_hv_h\Bigg)+\theta \hat{a}_h\big(u(t_{m+1}),v_h;t_{m+1}\big) & +  & \nonumber\\
	\big(1-\theta\big)\hat{a}_h\big(u(t_m),v_h;t_m\big) &= &  \Bigg(\theta f^{m+1}+(1-\theta)f^m,L_hv\Bigg). \nonumber \\
	\end{eqnarray}
	Thereby, we get
	\begin{equation}
	A=Y_1^m+Y_2^m+Y^3_m,
	\end{equation}
	where
	\begin{eqnarray}
	Y_1^m=\sum_{i=1}^N l_i \frac{I_hu_i(t_{m+1})-I_hu_i(t_m)}{\Delta t_m}v_i-\Bigg(\theta \dot{u}(t_{m+1})+(1-\theta)\dot{u}(t_m),v_h\Bigg),
	\end{eqnarray}
	\begin{eqnarray}
	Y_2^m=a_h\Big(\theta I_hu(t_{m+1})+(1-\theta)I_hu(t_m),v_h;t_{m+\theta}\Big) - \theta \hat{a}_h\Big(u(t_{m+1}),v_h;t_{m+1}\Big) &   &  \nonumber\\
	-(1-\theta)\hat{a}_h\Big(u(t_m),v_h;t_m\Big)  &  & 
	\end{eqnarray}
	and 
	\begin{eqnarray}
	Y_3^m= \Bigg(\theta f^{m+1}+(1-\theta)f^m,L_hv\Bigg)- \Bigg(\theta f^{m+1}+(1-\theta)f^m,v_h\Bigg)_h.
	\end{eqnarray}
	The estimation  of  $Y_1^m$ is  done exactly as in \cite[(54)]{angermann2007convergence} and we have:
	 \begin{equation} 
		Y_1^m=\Big(\omega^m,L_hv_h\Big)
		\end{equation}
		\begin{equation}
		\omega^m:=\frac{L_hu(t_{m+1})-L_hu(t_m)}{\Delta t_m}-\theta \dot{u}(t_{m+1})-(1-\theta)\dot{u}(t_m)
		\end{equation}
\begin{equation}
		\label{Y1-est}
		|Y_1^m| \leq || \omega^m ||_{L^2(\Omega)}  ||v_h||_{0,h}
		\end{equation}
		with
		\begin{equation*}
		||\omega^m||_{L^2(\Omega)}  \leq \mathcal{Q}_1^m\big(\Delta t_m,h\big):= \frac{1}{\Delta t_m} \int_{t_m}^{t_{m+1}} \big\|(L_h-I\big)\circ \dot{u}(s)\big\|_{L^2(\Omega)} ds+ \int_{t_m}^{t_{m+1}} ||\ddot{u} ||_{L^2(\Omega)} ds. 
		\end{equation*}
	      Estimate of $Y_2^m$
		\begin{equation}
		Y_2^m=a_h\Big(\theta I_hu(t_{m+1})+(1-\theta)I_hu(t_m),v_h\Big)-\theta \hat{a}_h\Big(u(t_{m+1}),v_h\Big)-(1-\theta)\hat{a}_h\Big(u(t_m),v_h\Big).
		\end{equation}
		By adding and extracting the term $\hat{a}_h\Big(\theta u(t_{m+1})+(1-\theta)u(t_m),v_h,t_{m+\theta}\Big)$ we get
		\begin{eqnarray}
			\label{A-Y2}
			Y_2^m &  =  & a_h\Big(\theta I_hu(t_{m+1})+(1-\theta)I_hu(t_m),v_h,t_{m+\theta}\Big)-\hat{a}_h\Big(\theta u(t_{m+1})+(1-\theta)u(t_m),v_h,t_{m+\theta}\Big)\nonumber\\
			&    & +\hat{a}_h\Big(\theta u(t_{m+1})+(1-\theta)u(t_m),v_h,t_{m+\theta}\Big)-\theta \hat{a}_h\Big(u(t_{m+1}),v_h,t_{m+1}\Big)\nonumber\\
			&   & -(1-\theta)\hat{a}_h\Big(u(t_m),v_h,t_{m}\Big)\nonumber\\
			&  = :& Y_{21}^m+Y_{22}^m,
		\end{eqnarray}
		where
		\begin{equation}
		Y_{21}^m=a_h\Big(\theta I_hu(t_{m+1})+(1-\theta)I_hu(t_m),v_h;t_{m+\theta}\Big)-\hat{a}_h\Big(\theta u(t_{m+1})+(1-\theta)u(t_m),v_h;t_{m+\theta})\Big),
		\end{equation}
		and 
		\begin{equation}
		Y_{22}^m=\hat{a}_h\Big(\theta u(t_{m+1})+(1-\theta)u(t_m),v_h;t_{m+\theta})\Big)-\theta \hat{a}_h\Big(u(t_{m+1}),v_h;t_{m+1}\Big)-(1-\theta)\hat{a}_h\Big(u(t_m),v_h;t_{m}\Big),
		\end{equation}
		with $t_{m+\theta}=\theta t_{m+1}+(1-\theta)t_m~~~~~~\theta\in [1/2,1]$.
			Estimate of $Y_{21}^m$
			
			Note that 
			\begin{eqnarray}
			\label{Y21-exp}
			Y_{21}^m  & = & \theta \Big(a_h\big(I_hu(t_{m+1}),v_h;t_{m+\theta}\big)-\hat{a}_h\big(u(t_{m+1}),v_h;t_{m+\theta}\big)\Big) \nonumber\\
			&    & +  (1-\theta) \Big(a_h\big(I_hu(t_{m}),v_h;t_{m+\theta}\big)-\hat{a}_h\big(u(t_{m}),v_h;t_{m+\theta}\big)\Big)\nonumber.
			\end{eqnarray}
			Let us consider the term
			\begin{equation}
			\delta_{21}(\omega,v_h,s):=a_h(I_h\omega,v_h;s)-\hat{a}_h(\omega,v_h;s)
			\end{equation}
			Thereby, using \eqref{bilaflux-TPFA} we have:
			\begin{eqnarray}
			\label{bil-conv}
			\delta_{21}(\omega,v_h,s)  & =  &  a_h(I_h \omega,v_h;s)-\hat{a}_h(\omega,v_h;s)  \nonumber \\
			&  = & \sum_{i=1}^N\Bigg(F_h\big(I_h \omega(x_{i+\frac{1}{2}})\big)-F_h\big(I_h \omega(x_{i-\frac{1}{2}})\big)\Bigg)\cdot v_i + \Big(c(s)I_h\omega,v_h\Big)_h  \nonumber\\
			&    &  -\sum_{i=1}^N\Bigg(F\big(\omega(x_{i+\frac{1}{2}})\big)-F\big(\omega(x_{i-\frac{1}{2}})\big)\Bigg)\cdot v_i-\Big(c(s)\omega,L_hv_h\Big) \nonumber \\
			&   =  &  \sum_{i=1}^N\Bigg(F_h\big(I_h \omega(x_{i+\frac{1}{2}})\big)-F\big(\omega(x_{i+\frac{1}{2}})\big)\Bigg)  v_i-\sum_{i=1}^N\Bigg(F_h\big(I_h \omega(x_{i-\frac{1}{2}})\big) \nonumber \\
			&    & -F\big(\omega(x_{i-\frac{1}{2}})\big)\Bigg) \cdot v_i+ c(s)\Bigg(\Big(I_h\omega,v_h\Big)_h-\Big(\omega,L_hv_h\Big)\Bigg) \nonumber\\
			&  =  &\sum_{i=1}^N\Bigg(F_h\big(I_h \omega(x_{i+\frac{1}{2}})\big)-F\big(\omega(x_{i+\frac{1}{2}})\big)\Bigg) \cdot v_i-\sum_{i=0}^{N-1}\Bigg(F_h\big(I_h \omega(x_{i+\frac{1}{2}})\big)\nonumber\\
			&    & -F\big(\omega(x_{i+\frac{1}{2}})\big)\Bigg) \cdot v_{i+1} +c(s)\Bigg(\Big(L_hI_h\omega,L_hv_h\Big)-\Big(\omega,L_hv_h\Big)\Bigg). \nonumber
			\end{eqnarray}
			We can also expand as
			\begin{eqnarray}
			\label{deltat}
			\delta_{21}(\omega,v_h,s)  
			& = &  -\Bigg(F_h\big(I_h\omega(x_{\frac{1}{2}})\big)-F\big(\omega(x_{\frac{1}{2}})\big)\Bigg) v_{1}+\sum_{i=1}^{N-1}\Bigg(F_h\big(I_h \omega(x_{i+\frac{1}{2}})\big)-F\big(\omega(x_{i+\frac{1}{2}})\big)\Bigg) \cdot \big(v_i-v_{i+1}\big)\nonumber\\
			&    & +\Big(F_h\big(I_h \omega(x_{N+\frac{1}{2}})\big)-F\big(\omega(x_{N+\frac{1}{2}})\big)\Big) \cdot v_N + c(s)\Bigg(\Big(L_h\omega,L_hv_h\Big)-\Big(\omega,L_hv_h\Big)\Bigg) \nonumber\\
			&  =  & \Bigg(F_h\big(I_h\omega(x_{\frac{1}{2}})\big)-F\big(\omega(x_{\frac{1}{2}})\big)\Bigg) \cdot   \Big(v_0-v_{1}\Big)  \nonumber\\
			&    & +\sum_{i=1}^{N-1}\Bigg(F_h\big(I_h \omega(x_{i+\frac{1}{2}})\big)-F\big(\omega(x_{i+\frac{1}{2}})\big)\Bigg) \cdot \big(v_i-v_{i+1}\big)\nonumber\\
			&    & +\Big(F_h\big(I_h \omega(x_{N+\frac{1}{2}})\big)-F\big(\omega(x_{N+\frac{1}{2}})\big)\Big) \cdot \Big(v_N-v_{N+1}\Big) + c(s)\Big(L_h\omega-w,L_hv_h\Big)   \nonumber\\
			 &  =  & \sum_{i=0}^{N} \Bigg(F_h\big(I_h \omega(x_{i+\frac{1}{2}})\big)-F\big(\omega(x_{i+\frac{1}{2}})\big)\Bigg) \cdot \big(v_i-v_{i+1}\big) + c(s)\Big(\big(L_h-I\big)\omega,L_hv_h\Big), \nonumber\\
			 &  =  & \delta_{211}+\delta_{212},
			\end{eqnarray}	
			where  $I$ is the identity operator and $\delta_{211}$ defined as follows:
		\begin{equation}
		\label{delta11}
		\delta_{211} = \sum_{i=0}^{N} \Bigg(F_h\big(I_h \omega(x_{i+\frac{1}{2}})\big)-F\big(\omega(x_{i+\frac{1}{2}})\big)\Bigg) \cdot \big(v_{i}-v_{i+1}\big)
		\end{equation}
			Moreover, we  also have
            \begin{eqnarray*}
             \label{delta10}
            \vert F_h\big(I_h \omega(x_{\frac{3}{2}})\big)-F\big(\omega(x_{\frac{1}{2}})\big) \big(v_0-v_{1}\big) \vert &  = &  \vert \frac{1}{\tau_{1/2}} \Bigg(F_h\big(I_h \omega(x_{\frac{3}{2}})\big)-F\big(\omega(x_{\frac{1}{2}})\big)\Bigg) \tau_{1/2}\big(v_0-v_{1}\big) \vert \\
             &  \leq   & \frac{1}{\tau_{1/2}} \Bigg( F_h\big(I_h \omega(x_{\frac{3}{2}})\big)-F\big(\omega(x_{\frac{1}{2}})\big)\Bigg)  \tau_{1/2}  \vert v_{1} \vert \nonumber\\ 
             &  \leq   & C_4 h_0  \int_{x_0}^{x_1} \Big(\vert F'(\omega)\vert + \vert \omega' \vert + \vert \omega \vert \Big) dx   \times  C_3 \vert v_1 \vert    \nonumber\\
             & \leq  & C_{15} h_0 \Bigg[ \int_{x_0}^{x_1} \Big(\vert F'(\omega)\vert + \vert \omega' \vert + \vert \omega \vert \Big)^2 dx  \Bigg]^{1/2} \sqrt{h_0 v_1^2} \nonumber\\
            \vert F_h\big(I_h \omega(x_{\frac{3}{2}})\big)-F\big(\omega(x_{\frac{1}{2}})\big) \big(v_0-v_{1}\big) \vert&  \leq  &  C_{15} h_0 \Bigg[ \int_{x_0}^{x_1} \Big(\vert F'(\omega)\vert + \vert \omega' \vert + \vert \omega \vert \Big)^2 dx  \Bigg]^{1/2} \sqrt{h_0 v_1^2},
             \end{eqnarray*}			
	where the positive constant $C_2$  and $C_3$ are given in \eqref{tau-bounds} and $C_{15}=\max\Big(C_3,C_4\Big)$.
Besides, we have: 			
			\begin{eqnarray}
			\label{bound-flux-tau}
			\sum_{i=1}^N   \Bigg(F_h\big(I_h \omega(x_{i+\frac{1}{2}})\big)-F\big(\omega(x_{i+\frac{1}{2}})\big)\Bigg)  \Big(v_i-v_{i+1}\Big) & = &  \sum_{i=1}^N \frac{1}{\sqrt{\tau_{i+\frac{1}{2}}}}\Bigg(F_h\big(I_h\omega(x_{i+\frac{1}{2}})\big)  \\
			&    & -F\big(\omega(x_{i-\frac{1}{2}})\big)\Bigg)   \times \sqrt{\tau_{i+\frac{1}{2}}} \Big(v_i-v_{i+1}\Big) \nonumber
			\end{eqnarray}
			Thereby, using  \eqref{fluxconst-TPFA} and  \eqref{tau-bounds}  in \eqref{bound-flux-tau} then we have
			\begin{eqnarray*}
			\sum_{i=1}^N  \vert \Bigg(F_h\big(I_h \omega(x_{i+\frac{1}{2}})\big)-F\big(\omega(x_{i+\frac{1}{2}})\big)\Bigg)  \big(v_i-v_{i+1}\big) \vert
			& \leq &  \sum_{i=1}^N \Bigg(\sqrt{C_2 h_i}   \int_{x_i}^{x_{i+1}}  \Big(\big\vert F'(\omega) \big\vert +\big\vert \omega' \big\vert + \big\vert \omega \big\vert \Big)dx \nonumber\\
			&    &   \times \sqrt{\tau_{i+\frac{1}{2}}}\vert \Big(v_i-v_{i+1}\Big)\vert \Bigg) \nonumber\\
			&   \leq  &  \sqrt{C_4} \sum_{i=1}^N \Bigg( h_i \Bigg[ \int_{x_i}^{x_{i+1}}  \Big(\big\vert F'(\omega) \big\vert +\big\vert \omega' \big\vert + \big\vert \omega \big\vert \Big)^2dx \Bigg]^{1/2}  \nonumber\\
			&    &  \times  \sqrt{\tau_{i+\frac{1}{2}}} \vert \Big(v_i-v_{i+1}\Big) \vert \Bigg) \nonumber\\
			&   \leq  &  \sqrt{C_4}  \sum_{i=1}^N  \Bigg( \Bigg[ \int_{x_i}^{x_{i+1}}  \Big(\big\vert F'(\omega) \big\vert +\big\vert \omega' \big\vert + \big\vert \omega \big\vert \Big)^2dx \Bigg]^{1/2}  \nonumber\\
			&    &  \times  \sqrt{\tau_{i+\frac{1}{2}}} \vert \Big(v_i-v_{i+1}\Big)\vert \Bigg) h \nonumber
			\end{eqnarray*}
			Indeed, we also have 
		\begin{eqnarray}
\label{delta1i}
	\sum_{i=1}^N   \vert \Bigg(F_h\big(I_h \omega(x_{i+\frac{1}{2}})\big)-F\big(\omega(x_{i+\frac{1}{2}})\big)\Bigg)  \big(v_i-v_{i+1}\big)   \vert         &  \leq  & \sqrt{C_4} \Bigg( \sum_{i=1}^N \int_{x_i}^{x_{i+1}} \Big(\big\vert F'(\omega) \big\vert +\big\vert \omega' \big\vert + \big\vert \omega \big\vert \Big)^2dx \Bigg)^{1/2} \\
                        &    & \times  \Bigg(\sum_{i=1}^N \tau_{i+\frac{1}{2}}\Big(v_i-v_{i+1}\Big)^2 \Bigg)^{1/2} h \nonumber\\
 	\sum_{i=1}^N  \vert \Bigg(F_h\big(I_h \omega(x_{i+\frac{1}{2}})\big)-F\big(\omega(x_{i+\frac{1}{2}})\big)\Bigg)  \big(v_i-v_{i+1}\big) \vert                          & \leq  & h\sqrt{C_4} \Bigg[\int_{x_1}^{x_{N+1}} \Big(\big\vert F(\omega) \big\vert +\big\vert \omega' \big\vert + \big\vert \omega \big\vert \Big)^2dx \Bigg]^{1/2}  \big\vert \big\vert v_h  \big\vert\big\vert_{0,\omega}  \nonumber\\
\end{eqnarray}		
Coming back to $\delta_{11}$ defined in \eqref{delta11}, and using the equations \eqref{delta10}, \eqref{delta1i}, and also the fact that $h_0\leq c l_1$ from Assumption \eqref{assum2}, we get:
\begin{eqnarray}
\delta_{211} &  \leq &  C_{16}\Bigg(h_0 \Bigg[ \int_{x_0}^{x_1} \Big(\vert F'(\omega)\vert + \vert \omega' \vert + \vert \omega \vert \Big)^2 dx  \Bigg]^{1/2} \sqrt{h_0 v_1^2} \\
&&+\Bigg[\int_{x_1}^{x_{N+1}} \Big(\big\vert F'(\omega)\big\vert +\big\vert \omega' \big\vert + \big\vert \omega \big\vert \Big)^2dx \Bigg]^{1/2}  h  \big\vert \big\vert v_h  \big\vert\big\vert_{0,\omega} \Bigg)\nonumber\\
& \leq  &  C_{17} h\Bigg[\int_{x_0}^{x_{N+1}} \Big(\big\vert F' (\omega)\big\vert +\big\vert \omega' \big\vert + \big\vert \omega \big\vert \Big)^2dx \Bigg]^{1/2}   \Big(\sqrt{l_1 v_1^2} + \big\vert \big\vert v_h  \big\vert\big\vert_{0,\omega} \Big) \nonumber\\
&   & \nonumber\\
& \leq   &   C_{18} h\Bigg[\int_{x_0}^{x_{N+1}} \Big(\big\vert F' (\omega)\big\vert +\big\vert \omega' \big\vert + \big\vert \omega \big\vert \Big)^2dx \Bigg]^{1/2}   \Big(l_1 v_1^2 + \big\vert \big\vert v_h  \big\vert\big\vert_{0,\omega}^2 \Big)^{1/2} \nonumber\\
&  \leq  &  C_{18} \Bigg[\int_{\Omega} \Big(\big\vert F'(\omega) \big\vert +\big\vert \omega' \big\vert + \big\vert \omega \big\vert \Big)^2dx \Bigg]^{1/2}  \cdot h \cdot   \big\vert \big\vert v_h  \big\vert\big\vert_{\omega,d} \nonumber\\
 &  \leq   &  C_{211}\Bigg(\big\vert F (\omega)\big\vert_1 +\big\vert\big\vert \omega \big\vert\big\vert_{1}\Bigg)\cdot h \cdot   \big\vert \big\vert v_h  \big\vert\big\vert_{\omega,d}.
\end{eqnarray}
Note that $\Vert. \Vert_1$ and $\vert. \vert_1$ are respectively the $H^1(\Omega)$ norm  and semi-norm.

For the second term  of \eqref{deltat}, $\delta_{212}$  is estimated as in \cite{angermann2007convergence}  and we have
		\begin{eqnarray*}
			\Bigg\vert c(s)\Big(\big(L_h-I\big)\omega,L_hv_h\Big) \Bigg\vert & \leq  &  C_{212}  \cdot h \cdot  \big\vert\big\vert \omega \big\vert\big\vert_{1,\omega} \cdot \big\vert\big\vert  v_h \big\vert\big\vert_{0,h}.
			\end{eqnarray*}
			Thus
			\begin{eqnarray}
			|\delta_{21}(\omega,v_h,s)|  & \leq  &  \Bigg\vert \sum_{i=0}^{N} \Bigg(F_h\big(I_h \omega(x_{i+\frac{1}{2}})\big)-F\big(\omega(x_{i+\frac{1}{2}})\big)\Bigg) \big(v_i-v_{i+1}\big) \Bigg\vert \nonumber\\
			&    &  +  \Bigg\vert c(s)\Big(\big(L_h-I\big)\omega,L_hv_h\Big) \Bigg\vert  \nonumber\\
			&  \leq   &     C_{211}\Bigg(\big\vert F(\omega)\big\vert_1+\big\vert\big\vert \omega \big\vert\big\vert_{1}\Bigg)\cdot h \cdot   \big\vert \big\vert v_h  \big\vert\big\vert_{\omega,d} 
			+ C_{212}  \cdot h \cdot  \big\vert\big\vert \omega \big\vert\big\vert_{1,\omega} \cdot \big\vert\big\vert  v_h \big\vert\big\vert_{0,h} \\
			&   \leq  & C_{211} \Bigg(\big\vert F(\omega)\big\vert_{1} +\big\vert\big\vert \omega \big\vert\big\vert_{1,\omega}\Bigg) \cdot  h  \cdot \vert\vert v_h \vert\vert_{\omega,d} + C_{212} \cdot  h  \cdot  \big\vert\big\vert \omega \big\vert\big\vert_{1,\omega} \cdot  \big\vert\big\vert  v_h \big\vert\big\vert_{\omega,d} \nonumber\\
			|\delta_{21}(\omega,v_h,s)| &  \leq   & C_{21} \Bigg(\big\vert F(\omega)\big\vert_{1} +\big\vert\big\vert \omega \big\vert\big\vert_{1,\omega}\Bigg) \cdot  h \cdot \vert\vert v_h \vert\vert_{\omega,h}
			\end{eqnarray}
			Using (\ref{Y21-exp}), we have
			\begin{eqnarray}
			\label{Y21-est}
			|Y_{21}^m| &  \leq & \theta \Big\vert \delta_{21}\Big(u(t_{m+1}),v_h,t_{m+\theta}\Big) \Big\vert +(1-\theta) \Big\vert \delta_{21}\Big(u(t_m),v_h,t_{m+\theta}\Big)\Big\vert \nonumber \\
			&    & \nonumber \\
			& \leq & C_{21} \Bigg(\big\vert F(\omega)\big\vert_{1} +\big\vert\big\vert \omega \big\vert\big\vert_{1,\omega}\Bigg) \cdot  h \cdot \vert\vert v_h \vert\vert_{\omega,h}
			\end{eqnarray}
			Estimate of $Y_{22}^m$ is done as \cite{angermann2007convergence}and we have  
			\begin{equation}
			\label{Y22-est}
			|Y_{22}^m| \leq C_{22} \Delta t_m  ||v_h||_{0,h}.
			\end{equation}
		Estimate of $Y^m_3$ is done as in \cite[$Y^m_4$]{angermann2007convergence}  and we have 
        \begin{equation}
		\label{Y3-est}
		\big\vert Y_3^m \big\vert  \leq  C_3 h \big\vert\big\vert v_h \big\vert\big\vert_{0,h}.
		\end{equation}  
	Coming back to the equation above 
	\begin{equation*}
	\sum_{i=1}^N  l_i \frac{W^{m+1}_i-W^m_i}{\Delta t_m}v_i+a_h\Big(\theta W^{m+1}+(1-\theta)W^m,v_h;t_{m+\theta}\Big)=Y_1^m+Y_2^m+Y_3^m.
	\end{equation*}
	Using  (\ref{Y1-est}),(\ref{Y21-est}),(\ref{Y22-est}) and (\ref{Y3-est}) we get
	\begin{eqnarray}
	\label{est1}
	\sum_{i=1}^N l_i \frac{W^{m+1}_i-W^m_i}{\Delta t_m}v_i+a_h\Big(\theta W^{m+1}+(1-\theta)W^m,v_h;t_{m+\theta}\Big)  & \leq &  \Bigg( \frac{1}{\Delta t_m} \int_{t_{m}}^{t_{m+1}} ||\big(L_h-I\big)\dot{u}(s)||_{L^2(\Omega)} ds +  \nonumber \\
	&    & \nonumber\\
	&    & + \int_{t_{m}}^{t_{m+1}} ||\ddot{u} ||_{L^2(\Omega)} ds\Bigg) \vert\vert v_h\vert\vert_{0,h}  \nonumber  \\
	&    & \nonumber\\
	&    &  +C_{21}\cdot h \cdot ||v_h||_{1,\omega}+C_{22} (\Delta t_m) ||v_h||_{0,h} \nonumber \\
	&   &  \nonumber \\
	&   & +C_3 h \big\vert\big\vert v_h \big\vert\big\vert_{0,h}.
	\end{eqnarray}
	By replacing $v_h$ by $W_h^{\theta}=\theta W^{m+1}+(1-\theta)W^m$ in (\ref{est1}), as in \cite{angermann2007convergence}  using the coercivity  property, we have:  
	\begin{eqnarray}
	\label{ineq1}
	\frac{1}{2\Delta t_m}\Bigg[\vert\vert W^{m+1}\vert\vert_{0,h}^2 - \vert\vert W^m\vert\vert_{0,h}^2\Bigg] + \alpha  \vert\vert W_h^{\theta}\vert\vert_{\omega,d}^2 & \leq  & \mathcal{Q}^m(\Delta t_m,h) ||W_h^{\theta}||_{\omega,d}\nonumber\\
	\end{eqnarray}
	with 
	\begin{equation*}
	\mathcal{Q}^m(\Delta t_m,h) \leq \mathcal{Q}_1^m(\Delta t_m,h)+C'\cdot \Big(h+\Delta t_m \Big),
	\end{equation*}
	and
	\begin{equation*}
	\mathcal{Q}_1^m(\Delta t_m,h)=\frac{1}{\Delta t_m} \int_{t_{m}}^{t_{m+1}} ||\big(L_h-I\big)\dot{u}(s)||_{L^2(\Omega)} ds +\int_{t_{m}}^{t_{m+1}} ||\ddot{u} ||_{L^2(\Omega)} ds.
	\end{equation*}
  Following \cite{angermann2007convergence}, we therefore
	\begin{eqnarray}
	\sum_{k=0}^{m-1} \Delta t_k \Big[\mathcal{Q}_1^k\big(\Delta t_k,h\big)\Big] & \leq & 2 \sum_{k=0}^{m-1}\Bigg[\int_{t_{k+1}}^{t_k}\vert\vert(L_h-I)\dot{u}(s)\vert\vert_{L^2(\Omega)}^2 ds + (\Delta t_k)^2 \int_{t_{k+1}}^{t_k}\vert\vert\ddot{u}(s)\vert\vert_{L^2(\Omega)}^2 ds\Bigg] \nonumber\\
	& \leq & 2 \int_0^T \vert\vert(L_h-I)\dot{u}(s)\vert\vert_{L^2(\Omega)}^2 ds + (\Delta t)^2 \int_0^T \vert\vert\ddot{u}(s)\vert\vert_{L^2(\Omega)}^2 ds,
	\end{eqnarray}
	where $\Delta t:=\max_{m=0,\ldots,M-1} \vert \Delta t_m \vert$. As in  \cite{angermann2007convergence}, we have 
	\begin{equation*}
	\vert\vert (L_h-I)\dot{u}(s) \vert\vert_{L^2(\Omega)} \leq h \vert \dot{u}(s) \vert_1
	\end{equation*}
	then we get
	\begin{equation}
	\label{SQ-1-est}
	\sum_{k=0}^{m-1} \Delta t_k \Big[\mathcal{Q}_1^k \big(\Delta t_k,h\big)\Big]  \leq  2\Bigg[ h^2 \vert\vert \dot{u} \vert\vert^2_{L^2\big(0,T;H^1(\Omega)\big)}+(\Delta t)^2 \vert\vert \ddot{u} \vert\vert^2_{L^2\big(0,T;L^2(\Omega)\big)} \Bigg].
	\end{equation}
	Following \cite[(66)]{angermann2007convergence}, (\ref{SQ-1-est})  yields 
	\begin{equation}
	\vert\vert W^m \vert\vert_{0,h}^2 \leq \vert\vert W^0 \vert\vert_{0,h}^2 + C'\Big(h^2+(\Delta t)^2\Big).
	\end{equation}
	By taking $u^0=I_hu_0$,  we have  $\vert\vert W^0 \vert\vert_{0,h} =0$ and  therefore
	\begin{equation}
	\vert\vert W^m \vert\vert_{0,h}  \leq C(h+\Delta t),
	\end{equation}
	which is actually
	\begin{equation}
	\label{proj-err}
	\vert\vert I_hu(t_m) - u_h^m \vert\vert_{0,h} \leq C(h+\Delta t).
	\end{equation}
	Therefore, using \eqref{interp-err} and \eqref{proj-err}, we get
	\begin{eqnarray}
	\big\vert\big\vert u(t_m)-u_h^m\big\vert\big\vert_{0,h} & \leq & \big\vert\big\vert u(t_m)-I_hu(t_m)\big\vert\big\vert_{0,h} + \big\vert\big\vert I_hu(t_m)-u_h^m\big\vert\big\vert_{0,h} \nonumber\\
	& \leq  &  C_{32} \cdot h + C'(h+\Delta t) \nonumber\\
	\big\vert\big\vert u(t_m)-u_h^m\big\vert\big\vert_{0,h} & \leq    &    C(h+\Delta t).
	\end{eqnarray}
\underline{$2^{nd} case$:} The  proof for  fitted TPFA method is done exactly in the same way.

%
%
%
%
%
	\end{proof}

\section{Numerical experiments}
\label{num-exp}
In this section, we perform numerical experiments for an European call option pricing problem. The error are computed with respect to the following analytical solution of the Black-Scholes PDE (see \cite{haug2007complete}): 
\begin{equation}
\label{analytical}
C(x,t)= xN(d_1)-Ke^{-rt}N(d_2),
\end{equation}
where
\begin{equation}
d_1=\dfrac{\ln(\frac{x}{K})+\big(r+\frac{\sigma^2}{2}\big)}{t}~~~~~~~~d_2=d_1-\sigma \sqrt{t}.
\end{equation}
with $t$ the time to maturity and $N(\cdot)$  the standard cumulative normal distribution function.
 The computational domain is $\Omega=[0,x_{\max}]\times (0,T]$ with $x_{\max}=300$ and the maturity time $T=1$. 
 These numerical experiments are performed using the risk free interest  rate $r=0.1$, the volatilty $\sigma=0.5$ and the strike price $K=100$.
\begin{figure}[htbp]
	\centering
	(a)
	\includegraphics[scale=0.4]{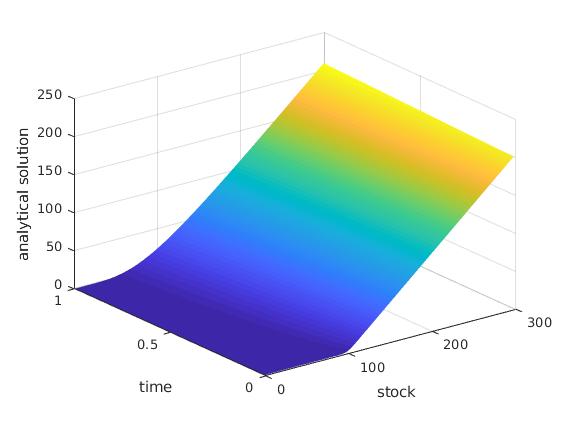}
	(b)
	\includegraphics[scale=0.4]{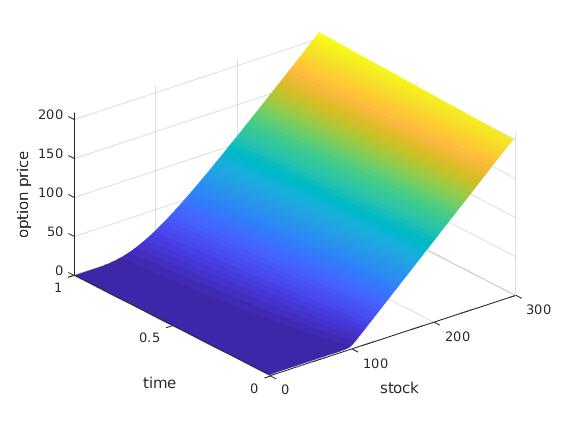}
	\caption{Analytical solution in (a) and numerical solution from fitted TPFA in (b) at maturity time $T=1$}
\end{figure}
Here  we have  performed space  errors by  fixing the time step at $dt=1/100$ and  vary the space step $h$. 
\begin{table}[!h]
	\centering
	\begin{tabular}{|c||c|c|c|c|c|c|c|c|c|}
		\hline
		\backslashbox{Num\\ meth}{Nb\\Grids pts}
		& 100 & 150 & 200 & 250 & 300 & 350 & 400 & 450 & 500  \\  
		\hline 
		TPFA & 0.0104 &   0.0069& 0.0052 & 0.0042 &  0.0035  & 0.003& 0.0026 & 0.0023 & 0.0021 \\
		\hline
		Fitted TPFA & 0.0103 &   0.0069& 0.0052 & 0.0041 &  0.0034  & 0.0029& 0.0026 & 0.0023 & 0.0021\\
		\hline
	\end{tabular} 
	\caption{Table of  space errors. The time step is fixed to be $dt=1/100$. }
\end{table}

For the time error, we fix the space step at $h=0.25$, and vary the time step $dt$. 
\begin{table}[!h]
	\centering
	\begin{tabular}{|c||c|c|c|c|c|c|c|c|c|}
		\hline
		\backslashbox{Num\\ meth}{Nb\\Time subdivisions }
		& 100 & 150 & 200 & 250 & 300 & 350 & 400 \\  
		\hline 
		TPFA & $8.98.10^{-4}$ & $8.83.10^{-4}$& $8.75.10^{-4}$ & $8.71.10^{-4}$ &  $8.69.10^{-4}$ & $8.66.10^{-4}$& $8.656.10^{-4}$ \\
		\hline
		Fitted TPFA &$8.98.10^{-4}$ &   $8.83.10^{-4}$& $8.75.10^{-4}$ & $8.71.10^{-4}$ &  $8.69.10^{-4}$  &$8.66.10^{-4}$& $8.656.10^{-4}$ \\
		\hline
	\end{tabular}
	\caption{Table of  time  errors.  The  space  step is fixed to be $h=0.25$. }
\end{table}
\begin{figure}[htbp]
	\centering
	\includegraphics[scale=0.4]{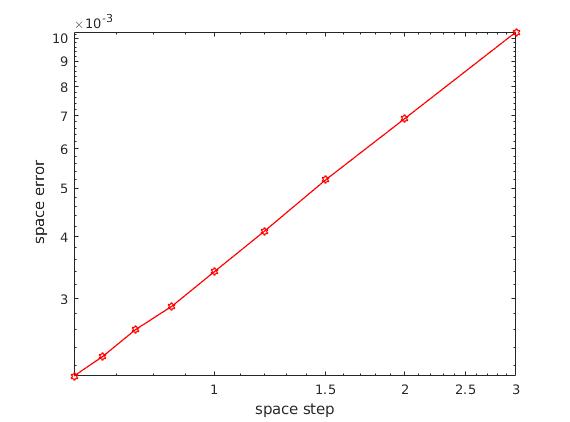}
	\caption{  Space step versus  $L^2$ Errors  in loglog scale. This graph  shows  the convergence in space of the  fitted TPFA.  The order of convergence in space is $\mathcal{O}(h)$,  this is in agreement with the theoretical result in Theorem \ref{maintheo}.
	 The time step is fixed to be $dt=1/100$.}
\end{figure}
\section{Conclusion}
In this paper, we have presented  two spatial numerical methods for spatial  discretization  of the Black-Scholes PDE for pricing options. 
The first scheme is the classical finite volume method with Two-Point Flux Approximation (TPFA) and the second scheme  is  a novel scheme called fitted Two-Point Flux Approximation (TPFA).
The novel fitted Two-Point Flux Approximation (TPFA)  combines the classical fitted finite volume method   and the  standard TPFA method.  
The  classical  fitted finite volume method is used to tackle the degeneracy of the   Black-Scholes  PDE near zero. 
The convergence analyses are performed  along with numerical experiments to confirm  the theoretical results.

\section*{Acknowledgement}
This work was supported by the Robert Bosch Stiftung through the AIMS ARETE Chair programme (Grant No 11.5.8040.0033.0).

\section*{References}


\end{document}